\documentclass[12pt]{amsart}
\usepackage{amscd,amsfonts,amsmath,amsthm,amssymb,mathrsfs,verbatim}
\usepackage{pstcol,pst-plot,pst-3d}
\usepackage{lmodern,pst-node, xfrac}



%
%
%
\def\NZQ{\Bbb}               
\def\NN{{\NZQ N}}

\def\ZZ{{\NZQ Z}}
\def\RR{{\NZQ R}}

\def\AA{{\NZQ A}}

%
%
\def\frk{\frak}               

\def\Phi{{\frk n}}
\def\Phi{{\frk N}}
%
\def\MA{{\mathcal A}}

\def\MM{{\mathcal M}}

\def\MS{{\mathcal S}}

\def\MG{{\mathcal G}}
\def\ML{{\mathcal L}}

%

%
\def\opn#1#2{\def#1{\operatorname{#2}}} 
%
\opn\chara{char} \opn\length{\ell} \opn\pd{pd} \opn\rk{rk}
\opn\projdim{proj\,dim} \opn\injdim{inj\,dim} \opn\rank{rank}
\opn\depth{depth} \opn\grade{grade} \opn\height{height}
\opn\embdim{emb\,dim} \opn\codim{codim}

\opn\Tr{Tr} \opn\bigrank{big\,rank}
\opn\superheight{superheight}\opn\lcm{lcm}
\opn\trdeg{tr\,deg}
\opn\reg{reg} \opn\lreg{lreg} \opn\ini{in} \opn\lpd{lpd}
\opn\size{size}\opn\bigsize{bigsize}
\opn\cosize{cosize}\opn\bigcosize{bigcosize}
\opn\sdepth{sdepth}\opn\sreg{sreg}
\opn\link{link}\opn\fdepth{fdepth}
%
\opn\div{div} \opn\Div{Div} \opn\cl{cl} \opn\Cl{Cl}
%
%
\opn\Spec{Spec} \opn\Supp{Supp} \opn\supp{supp} \opn\Sing{Sing}
\opn\Ass{Ass} \opn\Min{Min}\opn\Mon{Mon} \opn\dstab{dstab} \opn\astab{astab}
%
%
\opn\Ann{Ann} \opn\Rad{Rad} \opn\Soc{Soc}
%
%
\opn\Im{Im} \opn\Ker{Ker} \opn\Coker{Coker} \opn\Am{Am}
\opn\Hom{Hom} \opn\Tor{Tor} \opn\Ext{Ext} \opn\End{End}
\opn\Aut{Aut} \opn\id{id} \opn\span{span}

\opn\nat{nat}
\opn\pff{pf}
\opn\Pf{Pf} \opn\GL{GL} \opn\SL{SL} \opn\mod{mod} \opn\ord{ord}
\opn\Gin{Gin} \opn\Hilb{Hilb}\opn\sort{sort}
%
%
\opn\aff{aff} \opn\con{conv} \opn\relint{relint} \opn\st{st}
\opn\lk{lk} \opn\cn{cn} \opn\core{core} \opn\vol{vol}
\opn\link{link} \opn\star{star}\opn\lex{lex} \opn\Gr{Gr}
\opn\gr{gr}

%
%

\def\pot#1#2{#1[\kern-0.28ex[#2]\kern-0.28ex]}

%
%
\opn\dirlim{\underrightarrow{\lim}}
\opn\inivlim{\underleftarrow{\lim}}
%
%
%

%
%

\def\Implies{\ifmmode\Longrightarrow \else
        \unskip${}\Longrightarrow{}$\ignorespaces\fi}
\def\implies{\ifmmode\Rightarrow \else
        \unskip${}\Rightarrow{}$\ignorespaces\fi}
\def\iff{\ifmmode\Longleftrightarrow \else
        \unskip${}\Longleftrightarrow{}$\ignorespaces\fi}

\let\:=\colon
\newtheorem{Theorem}{Theorem}[section]
\newtheorem{Lemma}[Theorem]{Lemma}

\newtheorem{Proposition}[Theorem]{Proposition}
\newtheorem{Remark}[Theorem]{Remark}

\newtheorem{Example}[Theorem]{Example}
\newtheorem{Examples}[Theorem]{Examples}

\newtheorem{Question}[Theorem]{Question}

%
%
\let\epsilon\varepsilon
\let\kappa=\varkappa
%
%
\textwidth=15cm \textheight=22cm \topmargin=0.5cm
\oddsidemargin=0.5cm \evensidemargin=0.5cm \pagestyle{plain}
%
%
\def\qed{\ifhmode\textqed\fi
      \ifmmode\ifinner\quad\qedsymbol\else\dispqed\fi\fi}
\def\textqed{\unskip\nobreak\penalty50
       \hskip2em\hbox{}\nobreak\hfil\qedsymbol
       \parfillskip=0pt \finalhyphendemerits=0}
\def\dispqed{\rlap{\qquad\qedsymbol}}

%
\opn\dis{dis}
\def\pnt{{\raise0.5mm\hbox{\large\bf.}}}

\opn\Lex{Lex}



\begin{document}


\title{ Markov complexity of monomial curves }
\author{Hara Charalambous, Apostolos Thoma, Marius Vladoiu}\thanks{The third author was partially supported from grant PN--II--ID--PCE--2011--3--1023, nr. 247/2011, awarded by UEFISCDI}
 
\address{Hara Charalambous, Department of Mathematics, Aristotle University of Thessaloniki, Thessaloniki 54124, Greece }
\email{hara@math.auth.gr}

\address{Apostolos Thoma, Department of Mathematics, University of Ioannina, Ioannina 45110, Greece } 
\email{athoma@uoi.gr}

\address{Marius Vladoiu, Faculty of Mathematics and Computer Science, University of Bucharest, Str. Academiei 14, Bucharest, RO-010014, Romania, and}
\address{Simion Stoilow Institute of Mathematics of Romanian Academy, Research group of the project ID--PCE--2011--3--1023, P.O.Box 1--764, Bucharest
014700, Romania}
\email{vladoiu@gta.math.unibuc.ro}

\subjclass{14M25,13P10,62H17,05C90} 
\keywords{Toric ideals, Markov basis, Graver basis, Lawrence liftings}

\begin{abstract}
\par Let $\mathcal{A}=\{{\bf a}_1,\ldots,{\bf a}_n\}\subset\Bbb{N}^m$. We give an algebraic characterization of the universal Markov basis of the toric ideal $I_{\mathcal{A}}$. We show that the Markov complexity of $\mathcal{A}=\{n_1,n_2,n_3\}$ is equal to two if $I_{\mathcal{A}}$ is complete intersection and equal to three otherwise, answering a question posed by Santos and Sturmfels. We prove that for any  $r\geq 2$ there is a unique minimal Markov basis of $\mathcal{A}^{(r)}$. Moreover, we prove that for any integer $l$ there exist integers $n_1,n_2,n_3$ such that the Graver complexity of $\mathcal{A}$ is greater than $l$.  
\end{abstract}
\maketitle

\section*{Introduction}
Let $\Bbbk$ be a field, $n,m\in \NN$,  $\MA=\{{\bf a}_1,\ldots,{\bf a}_n\}\subset\NN^m$ and $A\in\MM_{m\times n}(\NN)$ be the matrix whose columns are the vectors of $\MA$. We  let $\ML({\MA}):=\Ker_{\ZZ}(A)$  be the corresponding sublattice of $\ZZ^n$  and denote by $I_{\MA}$ the corresponding  toric ideal of $\MA$ in $\Bbbk[x_1,\ldots, x_n]$. We recall that 
$I_{\MA}$ is generated by all binomials of the form $x^{\bf u }-x^{\bf w}$ where
$\ {\bf u}-{\bf w} \in  \mathcal{L}(\MA)$.

A {\em Markov basis} of $\MA$  is a finite subset $\MM$ of $\mathcal{L}(\MA)$ such that whenever ${\bf w}, {\bf u}\in \NN^{n}$ and ${\bf w}- {\bf u}\in \mathcal{L}(\MA)$ (i.e. $A {\bf w}=A {\bf u}$), there exists a subset $\{{\bf v}_i: i=1,\ldots, s\}$ of  ${\MM}$ that {\it connects} ${\bf w}$ to ${\bf u}$. This means that   $({\bf w}- {{\sum^p_{i=1}}}{{\bf v}_i})\in \NN^n$  for all $1\leq p\leq s$ and   ${\bf w}- {\bf u}=\sum^s_{i=1}{\bf v}_i $. A Markov basis $\MM$ of $\MA$  is {\it minimal} if no subset of $\MM$ is a Markov basis of $\MA$. For a vector ${\bf u}\in \mathcal{L}(\MA)$ we let $\bf u^+$, ${\bf u}^-$ be the unique vectors in $ \NN^{n}$ such that  $\bf u= \bf u^+ -\bf u^-$. If $\MM$  is a minimal Markov basis of $\MA$ then a classical result of Diaconis and Sturmfels  states that the set $\{x^{\bf u^+}-x^{\bf u^-}: \ {\bf u}\in \MM\}$ is a   minimal generating set of $I_{\MA}$, see \cite[Theorem 3.1]{DS}.  The {\em universal Markov basis} of $\MA$, which we denote by ${\MM}(\MA)$, is the union of all minimal Markov bases of $\MA$, where we  identify  elements that differ by a sign, see \cite[Definition 3.1]{HS}.  The intersection of all minimal Markov bases of $\MA$ via the same identification, is called the {\it indispensable subset} of the universal Markov basis $\MM(\MA)$ and  is denoted by $\MS(\MA)$.  The {\it Graver basis} of $\MA$,  $\MG(\MA)$,  is the subset of $\mathcal{L}(\MA)$ whose elements have {  no} {\it  proper conformal decomposition}, i.e.~${\bf u}\in \mathcal{L}(\MA)$ is in $\MG(\MA)$ if there is no other ${\bf v}\in \mathcal{L}(\MA)$ such that ${\bf v^+}\leq {\bf u^+}$ and   ${\bf v^-}\leq {\bf u^-}$, see \cite[Section 4]{St}.
The {\it Graver basis} of $\MA$ is always a finite set and contains the universal Markov basis of $\MA$,  see \cite[Section  7]{St}. Thus  the following inclusions hold: 
 \[\MS(\MA)\subseteq\MM(\MA)\subseteq\MG(\MA) .\] 
In \cite{CKT} a description was given for the elements of $\MS(\MA)$ and $\MM(\MA)$ that had a geometrical flavour: it considered the various {\it fibers} of $\MA$ in $\NN^n$ and the connected components of certain graphs. It did not examine the problem from a strict algebraic point of view such as  conformality.  This  point of view is   seen in \cite{HS}, but only for the elements of $\MS(\MA)$ from the side of sufficiency. In \cite{HS}, the authors show that any  vector with  no {\it proper semiconformal decomposition} is necessarily in  $\MS(\MA)$, see \cite[Lemma 3.10]{HS}. 
In  this paper we attempt to give the complete algebraic characterization for  the elements of $\MS(\MA)$ and $\MM(\MA)$.  This is done in Section 1.  In Proposition~\ref{indispensabledec} we prove that the condition of \cite[Lemma 3.10]{HS} is not only sufficient but also necessary. Next, to give the algebraic characterization of the vectors in $\MM(\MA)$, we introduce the notion of a {\it  proper strongly semiconformal decomposition}   and prove that the nonzero vectors with   {  no} proper strongly semiconformal decomposition are precisely the vectors of $\MM(\MA)$,  see Proposition~\ref{universalmarkov}. The relationship between these decompositions is given in Lemma~\ref{c_sc_ssc}. Schematically the following implications hold:
\[ \textrm{ proper conformal } \Rightarrow \textrm { proper strongly semiconformal }  \Rightarrow \textrm { proper semiconformal }   . \]
In Example~\ref{non_c_sc_ssc} we show that these implications are the best one could hope. It is important to note that the definitions of conformal and semiconformal decompositions involve exactly two summands. The natural and easy generalization to   decompositions involving   $l$ summands, $l\ge 2$, does not produce anything new: such decompositions lead  to  a conformal and semiconformal   decomposition with exactly two summands. This fact stands in contrast to the definition of a strongly semiconformal decomposition. As is shown in Example~\ref{2_not_enough}, a vector may have a  proper strongly semiconformal decomposition into $l$ vectors with $l>2$, but {\it not} a proper strongly semiconformal decomposition into exactly $2$ vectors. There are however certain classes of integer configurations for which the notion of proper strongly semiconformality into $l$ vectors with $l\ge 2$ coincides with the notion of  proper strongly semiconformality  into 2 vectors. Such is the   class given by the monomial curves in $\AA^3$ as we show in  Lemmas~\ref{uniquesemisum} and~\ref{sccompint}. Another class is given by the Lawrence liftings of  monomial curves in $\AA^3$, as follows from  Theorems~\ref{markovnotci} and~\ref{markcompci}. We also note that  incidence matrices of  graphs have this property, see \cite[Propositions 4.3 and 4.8]{RTT}. 

For $A\in\MM_{m\times n}(\NN)$ as above and  $r\ge 2$, the $r$--th {\it Lawrence lifting} of $A$   is denoted by $A^{(r)}$ and is the $(rm+n)\times rn$ matrix 

\[
A^{(r)}=\begin{array}{c} \overbrace{\quad\quad\quad \quad \quad\ \ }^{r-\textrm{times}} \\
 \left( \begin{array}{cccc}
\ A\ & 0 &   & 0 \\
0 &\ A\ &  & 0 \\
 & & \ddots &  \\
0 & 0 &  &\ A\ \\
I_n &I_n& \cdots & I_n
\end{array} \right)\end{array},\]
see \cite{SS}. We write $\ML(\MA^{(r)})$ for $\Ker_{\ZZ}(A^{(r)})$, denote by $\MA^{(r)}$ the matrix $A^{(r)}$, and  identify an element of $\mathcal{L}(\MA^{(r)})$ with  an $r\times n $ matrix:   each row of this matrix corresponds to an element of $\mathcal{L} (\MA)$ and the sum of its rows is zero. The   {\em type} of an element of $\mathcal{L}(\MA^{(r)})$ 
 is the number of  nonzero rows of this matrix. The {\em Markov complexity}, $m(\MA)$, is the largest type of any vector in the universal Markov basis of $A^{(r)}$ as $r$ varies. The {\em Graver complexity} of $\MA$,  $g(\MA)$, is the largest type of any vector in the Graver basis of $A^{(r)}$, as $r$ varies. We note that the study of $A^{(r)}$, for $A\in\MM_{m\times n}(\NN)$ was motivated by consideration of hierarchical models in Algebraic Statistics, see \cite{SS}. Aoki and Takemura, in \cite{AT}, while studying Markov bases for certain contingency tables with zero two-way marginal totals, gave the first examples of matrices with finite Markov complexity, see \cite[Theorem 4]{AT}.  In \cite[Theorem 1]{SS}, Santos and Sturmfels proved that  $m(\MA)$   is  bounded above by the Graver complexity of $\MA$, $g(\MA)$, and since the latter one is finite, $m(\MA)$ is also finite. In fact, $g(\MA)$ is the maximum $1$-norm of any element in the Graver basis of the Graver basis of $\MA$, \cite[Theorem 3]{SS}. Up to now, no formula for  $m(\MA)$ is known in general and there are only a few classes of toric ideals for which $m(\MA)$ has been computed, see \cite{AHT, AT, SS}. In this paper we compute   $m(\MA)$, when $\MA$ is a monomial curve in $\AA^3$.  This answers a question posed by Santos and Sturmfels in \cite{SS}, see Example 6 of that paper. We succeed in computing $m(\MA)$ by applying the results  of Section 1. 

The topic of  monomial curves has been the subject of extensive research ever since Herzog in \cite{He} studied such configurations. We recall that a monomial curve in the $d$-dimensional affine space $\AA^d$ is defined as the curve $\{(t^{n_1},\ldots,t^{n_d}):\ t\in \Bbbk\}$, where $n_1,\ldots,n_d$ are positive integers such that $\gcd(n_1,\ldots,n_d)=1$.  In this paper we write $\MA=\{ n_1,n_2,n_3\}\subset \ZZ_{>0}$ and implicitly assume that $\gcd(n_1,n_2,n_d)=1$. In \cite[Theorem 3.8]{He}, it was shown that  the toric ideal $I_{\MA}$ is  either a complete intersection or if not, then it is minimally generated by exactly three binomials.  We deal with each case separately. In Section 2, we consider the case when $I_{\MA}$ is {\it not} a complete intersection. We show that the universal Markov basis of $\MA^{(r)}$ is unique and compute its cardinality, see Theorem~\ref{markovnotci}. We also show   in the  same Theorem~\ref{markovnotci}  that $m(\MA)=3$. In Section 3, we consider the case when $I_{\MA}$ is  a complete intersection. As before we show that the universal Markov basis of $\MA^{(r)}$ is unique, compute its cardinality, and show that $m(\MA)=3$, see Theorem~\ref{markcompci}. We note that Herzog in \cite{He} describes all possible minimal generating sets of $I_{\MA}$ in either case,     an essential tool to our study. To be more precise,   with the notation of \cite{He}, for  $i\in\{1,2,3\}$ we consider $c_i$ to be the smallest element of $ \ZZ_{>0}$ such that there exist integers  $r_{ij},r_{ik}\in \NN$ with $\{i,j,k\}=\{1,2,3\}$, and with the property that $c_i n_i=r_{ij}n_j+ r_{ik}n_k$. What determines whether $I_{\MA}$ is a complete intersection or not is whether there are $i,j\in\{1,2,3\}$ such that $r_{ij}=0$. If $r_{ij}>0$ for all $i,j=1,2,3$ then $I_{\MA}$ is minimally generated by exactly three binomials and in this case, $I_{\MA}$ has a  unique minimal generating set which is explicitly described in \cite[Proposition 3.2, Proposition 3.3]{He}. If there exist $i,j\in\{1,2,3\}$ such that $r_{ij}=0$ then $I_{\MA}$ is a complete intersection and has no unique minimal binomial generating set. In this case the   universal Markov basis of $\MA$ is  explicitly described in \cite[Proposition 3.5]{He}.

As mentioned above, in Theorems~\ref{markovnotci} and~\ref{markcompci}, the uniqueness of the minimal Markov basis of $\MA^{(r)}$  for $r\ge 2$  was proved  for all monomial curves $\MA$ in $\AA^3$.  We want to further dwell on  this fact. It is well known that  $\MA^{(2)}$ has a unique minimal Markov basis for all $\MA\subset\NN^m$, see \cite[Theorem 7.1]{St}.  Moroever for  $r\geq 3$, the uniqueness of the minimal Markov bases for $\MA^{(r)}$ is noted for some classes of toric ideals, see \cite{AT,OH,SS}, see also     related \cite[Conjecture 3.8]{OH}. It is thus of interest to note that Lawrence liftings    of  monomial curves   in $\AA^3$   have   unique minimal Markov basis, although the curves themselves may not. 

In Section 4 we compare $m(\MA)$ with $g(\MA)$ when $\MA=\{n_1,n_2,n_3\} $ is a monomial curve in $\AA^3$. To find $g(\MA)$ we have to compute the Graver basis of the Graver basis of $\MA$, which is difficult since there is no good description of the Graver bases of monomial curves in $\AA^3$. In Theorem~\ref{lowerboundgraver} we  show that there is no integer that bounds from above the Graver complexities of all monomial curves and thus the difference between $m(\MA)$ and $g(\MA)$ can be made arbitrarily large. Furthermore we pose some questions that were motivated and partly based on the extensive computations that were done with 4ti2,  \cite{4ti2}.

\section{ Universal Markov basis}

Let $\MA=\{{\bf a}_1,\ldots,{\bf a}_n\}\subset \NN^m$ and 
 $A\in\MM_{m\times n}(\NN)$,   $\ML({\MA})\subset \ZZ^n$, $I_{\MA}\subset \Bbbk[x_1,\ldots, x_n]$
be the corresponding matrix, lattice and toric ideal of  $\MA$ respectively. We  note that the only invertible element of   
$\NN \MA$ is ${\bf 0}$ and   $\ML({\MA})\cap\NN^n=\{\bf 0\}$. 

Let $\bf u$, ${\bf w}_1,{\bf w}_2 \in \ML({\MA})$ be such that ${\bf u}={\bf w}_1 + {\bf w}_2$. We say that the above sum is a {\bf conformal decomposition} of ${\bf u}$ and write ${\bf u}={\bf w}_1 +_{c} {\bf w}_2$ if  ${\bf u}^+={\bf w}_1^+ + {\bf w}_2^+$ and ${\bf u}^-={\bf w}_1^- + {\bf w}_2^-$.  If both ${\bf w}_1,{\bf w}_2$ are nonzero, we call such a decomposition   {\bf  proper}. The  {\em Graver basis} of $\MA$, $\MG(\MA)$,  consists of the nonzero vectors in $\ML({\MA})$ for which there is no proper conformal decomposition and is a finite set, see for example \cite[Algorithm 7.2]{St}. 

The notion of a semiconformal decomposition was introduced in \cite[Definition 3.9]{HS}. Let ${\bf u},{\bf v},{\bf w}\in\ML({\MA})$. We say that  ${\bf u}={\bf v}+_{sc}{\bf w}$  is a {\bf semiconformal decomposition}   of ${\bf u}$ if ${\bf u}={\bf v}+{\bf w}$ and ${\bf v}(i)>0$ implies that ${\bf w}(i)\geq 0$ and ${\bf w}(i)<0$ implies that ${\bf v}(i)\leq 0$ for  $1\leq i\leq n$.  Here ${\bf v}(i)$ denotes the $i^{th}$ coordinate of the vector ${\bf v}$. We call the decomposition {\bf proper} if both ${\bf v}, {\bf w}$ are nonzero. It is easy to see that  ${\bf u}={\bf v}+_{sc} {\bf w}$  if and only if  ${\bf u}^+\geq {\bf v}^+$ and ${\bf u}^-\geq {\bf w}^-$. We remark that  $\bf 0$ cannot be written as the semiconformal sum of two nonzero vectors  since $\ML({\MA})\cap \NN^n=\{\bf 0\}$. When writing a semiconformal decomposition of ${\bf u}$ it is necessary to specify the order of the vectors added. A semiconformal decomposition of ${\bf u}$ for which the order of the vectors can be reversed is a conformal decomposition, that is
\[ \textrm{if }
{\bf u}={\bf v}+_{sc} {\bf w} \textrm{ and } {\bf u}={\bf w}+_{sc} {\bf v}  \textrm{ then } {\bf u}={\bf v}+_{c} {\bf w}.
\] 
We note that a semiconformal decomposition of ${\bf u}$ gives rise to a semiconformal decomposition of $-{\bf u}$ and vice versa, by simply reversing the order of the summands:
\[
{\bf u}={\bf v}+_{sc} {\bf w} \Leftrightarrow -{\bf u}=(-{\bf w})+_{sc} (-{\bf v})\ .
\]
Let ${\bf u}\in\ML(\MA)$. The {\em fiber} $\mathcal{F}_{{\bf u}}$  is the set $\{{\bf t}\in\NN^n : {{\bf u}}^+-{\bf t}\in \ML(\MA)\}$. $\mathcal{F}_{{\bf u}}$ is  a finite set, see for example \cite[Proposition 2.3]{CTV}. Next we show that lack of a proper semiconformal decomposition  is not only a sufficient condition for an element to be in ${\mathcal S}(\MA)$ as   was shown in  \cite[Lemma 3.10]{HS}, but it is also 
a necessary  condition.  

\begin{Proposition}\label{indispensabledec}  The indispensable part ${\mathcal S}(\MA)$ of the universal Markov basis consists of all nonzero vectors in $\ML({\MA})$ which have no proper semiconformal decomposition.
\end{Proposition}
\begin{proof} We only need to show that if ${\bf u}\in {\mathcal S}(\MA)$ then ${\bf u}$ has no proper semiconformal decomposition. Suppose that ${\bf u}={\bf v}+_{sc}{\bf w}$ for some nonzero vectors ${\bf v,w}\in\ML(\MA)$. Since ${\bf u}\in\MS(\MA)$ the binomial  $x^{{\bf u}^+}-x^{{\bf u}^-}$ belongs to all minimal system of generators of $I_{\MA}$. It follows that $\mathcal{F}_{\bf u}$ consists of  exactly two elements, ${\bf u}^+$ and ${\bf u}^-$, see \cite[Corollary 2.10]{CKT}. On the other hand ${\bf u}={\bf v}+_{sc} {\bf w}$ implies that  ${\bf u}^{+} -{\bf v}= {\bf u}^{-}+{\bf w}\in \NN^n$. Moreover $A ({\bf u}^{+} -{\bf v})=A {\bf u}^{+}$ and thus ${\bf u}^{+} -{\bf v}$ is in the fiber of ${\bf u}^+$. This implies that either ${\bf u}^{+} -{\bf v}={\bf u}^{+}$ and thus ${\bf v}={\bf 0}$ or ${\bf u}^{+} -{\bf v}={\bf u}^{-}$ which then  implies that ${\bf w}={\bf 0}$, a contradiction in either case.
\end{proof}

Let ${\bf u}, {\bf u}_1,\ldots,{\bf u}_l \in\ML({\MA})$, $l\geq 2$. We say that ${\bf u}=_{ssc}{\bf u}_1+\cdots+{\bf u}_l$,  is a {\bf strongly semiconformal decomposition} if  ${\bf u}={\bf u}_1+\cdots+{\bf u}_l$ and the following conditions are satisfied:
\[
{\bf u}^+>{\bf u}_1^+ \quad \text{ and } \quad {\bf u}^+ >(\sum_{j=1}^{i-1} {\bf u}_j)+{\bf u}_i^+ \ \text{ for all } \ i=2,\ldots,l.
\] 
When $l=2$, we simply write  ${\bf u}={\bf u}_1+_{ssc} {\bf u}_2$. Note that ${\bf u}={\bf u}_1+_{ssc} {\bf u}_2$ implies that ${\bf u}^+>{\bf u}_1^+$ and ${\bf u}^->{\bf u}_2^-$.
We say that the decomposition is {\bf proper} if all ${\bf u}_1,\ldots,{\bf u}_l$ are nonzero. 
We remark that if 
${\bf u}=_{ssc}{\bf u}_1+\cdots+{\bf u}_l$   is proper then
 ${\bf u}^+,{\bf u}^+-{\bf u}_1,\ldots,{\bf u}^+-\sum_{i=1}^l {\bf u}_i={\bf u}^-\in \NN^n$ and thus are distinct elements of 
 $ \mathcal{F}_{{\bf u}}$. 
In the following lemma we show  the implications amongst the  three types of decompositions we defined above. It is immediate that a conformal decomposition is also a semiconformal decomposition. 

\begin{Lemma}\label{c_sc_ssc}
Let ${\bf u}\in\ML({\MA})$ be a nonzero vector. Then the following hold:
\begin{enumerate}
\item[(i)] If ${\bf u}$ has a proper conformal decomposition then ${\bf u}$ has a proper strongly semiconformal decomposition,
\item[(ii)] If ${\bf u}$ has a proper strongly semiconformal decomposition then ${\bf u}$ has a proper semiconformal decomposition.
\end{enumerate}
\end{Lemma}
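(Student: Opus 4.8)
The plan is to prove each implication directly from the definitions, exhibiting the required witness vectors explicitly.

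\medskip

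For part (i), suppose ${\bf u}$ has a proper conformal decomposition ${\bf u}={\bf w}_1 +_c {\bf w}_2$ with ${\bf w}_1,{\bf w}_2$ nonzero. By definition of conformality we have ${\bf u}^+={\bf w}_1^+ + {\bf w}_2^+$ and ${\bf u}^-={\bf w}_1^- + {\bf w}_2^-$. First I would check that this is already a proper strongly semiconformal decomposition with $l=2$, taking ${\bf u}_1={\bf w}_1$ and ${\bf u}_2={\bf w}_2$. The strongly semiconformal conditions for $l=2$ reduce to ${\bf u}^+ > {\bf u}_1^+$ and ${\bf u}^- > {\bf u}_2^-$ (as noted in the remark preceding the lemma). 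The inequality ${\bf u}^+ \geq {\bf w}_1^+$ is immediate from ${\bf u}^+={\bf w}_1^+ + {\bf w}_2^+$ with ${\bf w}_2^+\in\NN^n$; the strictness follows because ${\bf w}_2\neq {\bf 0}$ forces ${\bf w}_2^+ \neq {\bf 0}$ or ${\bf w}_2^- \neq {\bf 0}$, and I would argue that at least one of the two strict inequalities must hold (if both ${\bf w}_2^+={\bf 0}$ and ${\bf w}_2^-={\bf 0}$ then ${\bf w}_2={\bf 0}$, a contradiction). The only subtlety is that strongly semiconformal requires \emph{strict} inequality in both coordinates simultaneously in the $l=2$ case; if, say, ${\bf w}_2^+={\bf 0}$ but ${\bf w}_2^-\neq{\bf 0}$, then ${\bf u}^+={\bf w}_1^+$ is not strict, so the order ${\bf u}={\bf w}_1+_{ssc}{\bf w}_2$ fails. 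Here I would use that a conformal decomposition is symmetric, so I can reverse the roles and instead write ${\bf u}={\bf w}_2+_{ssc}{\bf w}_1$, which succeeds precisely when ${\bf w}_1^- \neq {\bf 0}$. Handling the case analysis on which summand carries the positive/negative support is the main bookkeeping obstacle, and I expect to resolve it by choosing the ordering of the two summands appropriately.

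\medskip

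For part (ii), suppose ${\bf u}=_{ssc}{\bf u}_1+\cdots+{\bf u}_l$ is a proper strongly semiconformal decomposition. The idea is to collapse the $l$ summands into two by grouping ${\bf v}={\bf u}_1$ and ${\bf w}={\bf u}_2+\cdots+{\bf u}_l={\bf u}-{\bf u}_1$, both of which lie in $\ML(\MA)$ and are nonzero (${\bf v}={\bf u}_1\neq{\bf 0}$ by properness, and ${\bf w}={\bf 0}$ would force ${\bf u}={\bf u}_1$, contradicting ${\bf u}^+>{\bf u}_1^+$). I would then verify the semiconformality criterion in the convenient form stated in the excerpt: ${\bf u}={\bf v}+_{sc}{\bf w}$ if and only if ${\bf u}^+\geq {\bf v}^+$ and ${\bf u}^-\geq {\bf w}^-$. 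The first inequality ${\bf u}^+\geq {\bf u}_1^+$ is immediate from the strong condition ${\bf u}^+>{\bf u}_1^+$. For the second, I need ${\bf u}^-\geq {\bf w}^-=({\bf u}-{\bf u}_1)^-$; equivalently, writing ${\bf w}={\bf u}-{\bf u}_1$, I would use the remark in the text that the strongly semiconformal conditions imply ${\bf u}^+-{\bf u}_1={\bf u}^+-{\bf u}_1 \in \NN^n$ lies in the fiber, so that ${\bf u}^+ \geq {\bf u}_1$ coordinatewise, whence ${\bf u}-{\bf u}_1 = ({\bf u}^+-{\bf u}_1) - {\bf u}^-$ has negative part bounded by ${\bf u}^-$. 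The key fact I would invoke is the observation recorded just before the lemma: for a proper strongly semiconformal decomposition, ${\bf u}^+, {\bf u}^+-{\bf u}_1, \ldots, {\bf u}^-$ are all in $\NN^n$, which immediately gives ${\bf u}^+ - {\bf u}_1 \in \NN^n$ and hence the needed bound on the negative part of ${\bf w}$.

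\medskip

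I expect part (ii) to be the cleaner of the two, essentially an immediate consequence of the fiber remark, while part (i) carries the real (though elementary) content in the case distinction on the supports of the conformal summands. No deep input is required beyond the equivalent characterizations of the two decomposition types already established in the surrounding text.
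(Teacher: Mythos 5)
Your part (ii) is correct and in fact more direct than the paper's argument. The paper groups the summands the same way, as ${\bf v}={\bf u}_1$ and ${\bf w}={\bf u}_2+\cdots+{\bf u}_l$, but then verifies semiconformality by passing to the fiber $\mathcal F_{\bf u}$, writing $x^{{\bf u}^+}-x^{{\bf u}^-}$ as a combination involving $x^{\bf t}=\gcd(x^{{\bf u}^+-{\bf u}_1},x^{{\bf u}^-})$, extracting a lattice vector from that identity and only then identifying it with ${\bf w}$. Your route --- observe ${\bf u}^+-{\bf u}_1=({\bf u}^+-{\bf u}_1^+)+{\bf u}_1^-\in\NN^n$, hence $({\bf u}-{\bf u}_1)^-=\bigl(({\bf u}^+-{\bf u}_1)-{\bf u}^-\bigr)^-\leq{\bf u}^-$ --- reaches the criterion ${\bf u}^+\geq{\bf v}^+$, ${\bf u}^-\geq{\bf w}^-$ with no detour through binomials, and uses only the first inequality ${\bf u}^+>{\bf u}_1^+$ from the definition of $=_{ssc}$. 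That is a genuine, if small, simplification.

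In part (i) the case analysis you set up is actually vacuous, and this is the one place where your sketch is not yet a proof. The case you worry about, ${\bf w}_2^+={\bf 0}$ with ${\bf w}_2^-\neq{\bf 0}$, cannot occur: it would make $-{\bf w}_2$ a nonzero element of $\ML(\MA)\cap\NN^n$, which is $\{{\bf 0}\}$ (recorded at the start of Section 1). This is exactly the paper's one-line argument: from ${\bf u}^+={\bf w}_1^++{\bf w}_2^+$, equality ${\bf u}^+={\bf w}_1^+$ would force ${\bf w}_2^+={\bf 0}$ and hence ${\bf w}_2={\bf 0}$, contradicting properness; symmetrically ${\bf u}^-\neq{\bf w}_2^-$, so the given order is already strongly semiconformal and no reordering is ever needed. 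If you insist on the reordering route you still need this lattice fact to close cases such as ${\bf w}_1^+={\bf w}_2^+={\bf 0}$ (which forces ${\bf u}^+={\bf 0}$ and hence ${\bf u}={\bf 0}$), and your stated success criterion for the reversed order is off: ${\bf u}={\bf w}_2+_{ssc}{\bf w}_1$ requires ${\bf w}_1^+\neq{\bf 0}$ and ${\bf w}_2^-\neq{\bf 0}$, not ${\bf w}_1^-\neq{\bf 0}$. State once that every nonzero vector of $\ML(\MA)$ has nonzero positive and nonzero negative part, and part (i) collapses to two lines.
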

\begin{proof}
For (i) note that if ${\bf u}={\bf v}+_{c}{\bf w}$ then ${\bf u}^+={\bf v}^++{\bf w}^+$ and ${\bf u}^-={\bf v}^-+{\bf w}^-$. If ${\bf u}^+={\bf v}^+$ then ${\bf w}^+={\bf 0}$ and thus ${\bf w}={\bf 0}$, a contradiction. Similarly, one shows that ${\bf u}^-\neq{\bf w}^-$ and thus ${\bf u}={\bf v}+_{ssc}{\bf w}$.  

In order to prove (ii), assume that ${\bf u}$ admits a proper strongly semiconformal decomposition: there exists 
$l\geq 2$ and ${\bf u}_1,\ldots,{\bf u}_l\in\ML({\MA})\setminus\{\bf 0\}$ such that ${\bf u}=_{ssc}{\bf u}_1+\cdots+{\bf u}_l$. In particular,  ${\bf u}^+>{\bf u}_1^+$. We let ${\bf v}={\bf u}_2+\cdots+{\bf u}_l$.
We will show  that ${\bf u}={\bf u}_1+_{sc}{\bf v}$. Indeed, since $A({\bf u}^+-{\bf u}_1)=A{\bf u}^+$ and ${\bf u}^+-{\bf u}_1={\bf u}^+-{\bf u}_1^++{\bf u}_1^->{\bf 0}$ it follows that ${\bf u}^+-{\bf u}_1$ belongs to the fiber of ${\bf u}^+$. Moreover, ${\bf u}^+\neq{\bf u}^+-{\bf u}_1$ since ${\bf u}_1\neq {\bf 0}$. On the other hand, if ${\bf u}^+-{\bf u}_1={\bf u}^-$ then we obtain that ${\bf u}={\bf u}_1$ and thus ${\bf u}^+={\bf u}_1^+$, a contradiction. Therefore, the fiber $\mathcal{F}_{{\bf u}}$ contains at least three different vectors: ${\bf u}^+,{\bf u}^+-{\bf u}_1,{\bf u}^-$. Hence we have the following expression
\[
x^{{\bf u}^+}-x^{{\bf u}^-}=x^{{\bf u}^+-{\bf u}_1^+}(x^{{\bf u}_1^+}-x^{{\bf u}_1^-})+x^{\bf t}(x^{{\bf u}^+-{\bf u}_1-{\bf t}}-x^{{\bf u}^--{\bf t}}),
\] 
where $x^{\bf t}$ is the monomial with the property that $\gcd(x^{{\bf u}^+-{\bf u}_1},x^{{\bf u}^-})=x^{\bf t}$. The last condition implies that there exists a nonzero vector ${\bf w}\in\ML({\MA})$ such that ${\bf w}^+={\bf u}^+-{\bf u}_1-{\bf t}$ and ${\bf w}^-={\bf u}^--{\bf t}$. It is easy to see that ${\bf u}={\bf u}_1+_{sc} {\bf w}$. On the other hand we have ${\bf u}={\bf u}_1+{\bf v}$ and thus ${\bf v}={\bf w}$, which implies our claim.
\end{proof}

The reverse implications from Lemma~\ref{c_sc_ssc} do not follow as the next example shows. 

\begin{Example}\label{non_c_sc_ssc}
{\em Let $\MA=\{2,3,11\}$. The  toric ideal $I_{\MA}$ is a complete intersection and there are exactly two minimal Markov bases of $\MA$: $\{(-3,2,0), (4,1,-1)\}$ and $\{(-3,2,0),(1,3,-1)\}$, see Proposition~\ref{completeint}. Thus $\mathcal M(\MA)=\{(-3,2,0)$, $(4,1,-1)$, $ (1,3,-1)\}$ while ${\mathcal S}(\MA)=\{(-3,2,0)\}$. Moreover, computation with 4ti2 \cite{4ti2} show that the elements of  $\mathcal{G}(\MA)$  are  ${\bf u}_1= (0,11,-3)$, 
${\bf u}_2=(3,-2,0)$, ${\bf u}_3=(4,1,-1)$, ${\bf u}_4=(1,3,-1)$, ${\bf u}_5=(7,-1,-1)$, ${\bf u}_6=(11,0,-2)$, ${\bf u}_7= (1,-8,2)$, ${\bf u}_8=(2,-5,1) $. We note that ${\bf u}_5={\bf u}_2+_{ssc} {\bf u}_3$ while $ {\bf u}_5\neq {\bf u}_2+_{c} {\bf u}_3 $ and that even though
${\bf u}_3={\bf u}_2+_{sc} {\bf u}_4$ the strong semiconformality does not hold:  ${\bf u}_3\neq {\bf u}_2+_{ssc} {\bf u}_4$. 
}
\end{Example}

If ${\bf u}\in \ML({\MA})$ we associate an  $\MA$-degree to ${\bf u}$ and  the binomial $B=x^{{{\bf u}}^+}-x^{{{\bf u}^-}}\in I_{\MA}$ as follows: 
\[ 
\deg_{\MA}( {\bf u})=\deg_{\MA}(B)=\sum_{i=1}^n {\bf u}^+(i)\;{\bf a}_i.
\]
We note that by  \cite[Proposition 2.2]{CKT} ${\bf u}$ is in a minimal Markov basis  of  ${\MA}$ and $x^{{{\bf u}}^+}-x^{{{\bf u}^-}}$ is part of a minimal generating set of $I_{\MA}$ if and only if $x^{{{\bf u}}^+}-x^{{{\bf u}^-}}$ is not in the ideal generated by the binomials of $I_{\MA}$ of strictly smaller $\MA$-degrees.

\begin{Proposition}\label{universalmarkov} The universal Markov basis ${\MM}(\MA)$ of $\MA$ consists  of all nonzero vectors in $\ML({\MA})$ with no proper strongly  semiconformal decomposition.
\end{Proposition}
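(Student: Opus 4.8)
The plan is to reduce the statement to a question about binomial--ideal membership and then to connectivity inside a single fiber. Write ${\bf d}=\deg_{\MA}({\bf u})$ and let $J$ be the ideal generated by all binomials of $I_{\MA}$ whose $\MA$-degree is strictly smaller than ${\bf d}$. Recall from the remark preceding the proposition (based on \cite[Proposition 2.2]{CKT}), together with the fact that ${\MM}(\MA)$ is the union of all minimal Markov bases, that a nonzero ${\bf u}\in\ML({\MA})$ lies in ${\MM}(\MA)$ precisely when $x^{{\bf u}^+}-x^{{\bf u}^-}\notin J$. Thus it suffices to prove that ${\bf u}$ admits a proper strongly semiconformal decomposition if and only if $x^{{\bf u}^+}-x^{{\bf u}^-}\in J$, for negating this equivalence and combining it with the criterion above yields the proposition.

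For the implication ``$\Rightarrow$'' I would use a telescoping argument modelled on the proof of Lemma~\ref{c_sc_ssc}(ii). Given a proper decomposition ${\bf u}=_{ssc}{\bf u}_1+\cdots+{\bf u}_l$, set ${\bf p}_0={\bf u}^+$ and ${\bf p}_i={\bf u}^+-\sum_{j=1}^i{\bf u}_j$, so that ${\bf p}_l={\bf u}^-$; the defining inequalities are exactly ${\bf p}_{i-1}>{\bf u}_i^+$, which place every ${\bf p}_i$ in $\NN^n$, hence in $\mathcal{F}_{{\bf u}}$ with $A{\bf p}_i={\bf d}$. Since ${\bf p}_{i-1}-{\bf u}_i^+$ is a nonzero vector of $\NN^n$ and $\ML({\MA})\cap\NN^n=\{{\bf 0}\}$, its image under $A$ is a nonzero element of $\NN\MA$, so $\deg_{\MA}({\bf u}_i)=A{\bf u}_i^+={\bf d}-A({\bf p}_{i-1}-{\bf u}_i^+)$ is strictly smaller than ${\bf d}$ and $x^{{\bf u}_i^+}-x^{{\bf u}_i^-}$ is a generator of $J$. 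Each consecutive difference then factors as $x^{{\bf p}_{i-1}}-x^{{\bf p}_i}=x^{{\bf p}_{i-1}-{\bf u}_i^+}\,(x^{{\bf u}_i^+}-x^{{\bf u}_i^-})\in J$, and summing the telescope gives $x^{{\bf u}^+}-x^{{\bf u}^-}=\sum_{i=1}^l(x^{{\bf p}_{i-1}}-x^{{\bf p}_i})\in J$.

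For the implication ``$\Leftarrow$'' I would convert membership into a lattice walk. Because $J$ is a binomial ideal, the standard correspondence between binomial membership and connectivity by the generating moves (see \cite{DS,St}) produces a sequence ${\bf u}^+={\bf r}_0,{\bf r}_1,\ldots,{\bf r}_s={\bf u}^-$ in $\NN^n$ in which each step passes from ${\bf r}_{t-1}$ to ${\bf r}_t$ by applying a generator of $J$. Setting ${\bf u}_t:={\bf r}_{t-1}-{\bf r}_t$, each ${\bf u}_t$ is a nonzero element of $\ML({\MA})$ with ${\bf r}_{t-1}\ge{\bf u}_t^+$, and $\MA$-homogeneity of the generators keeps every ${\bf r}_t$ in $\mathcal{F}_{{\bf u}}$. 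As the applied generator is $\MA$-homogeneous of degree strictly below ${\bf d}$, the quantity ${\bf d}-\deg_{\MA}({\bf u}_t)$, which equals $A({\bf r}_{t-1}-{\bf u}_t^+)$, is nonzero, so ${\bf r}_{t-1}\ne{\bf u}_t^+$ and hence ${\bf r}_{t-1}>{\bf u}_t^+$. Since $\sum_{j<t}{\bf u}_j={\bf u}^+-{\bf r}_{t-1}$, these strict inequalities are exactly the conditions ${\bf u}^+>(\sum_{j<t}{\bf u}_j)+{\bf u}_t^+$ defining strong semiconformality, while $\sum_{t}{\bf u}_t={\bf r}_0-{\bf r}_s={\bf u}$. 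Finally $s\ge 2$, for a walk of length one would give ${\bf u}_1={\bf u}$, whence $\deg_{\MA}({\bf u})=\deg_{\MA}({\bf u}_1)$ would be strictly smaller than ${\bf d}=\deg_{\MA}({\bf u})$, which is absurd; hence ${\bf u}=_{ssc}{\bf u}_1+\cdots+{\bf u}_s$ is a proper strongly semiconformal decomposition.

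The hard part is the second implication. Its delicate points are invoking the walk correspondence to replace the purely algebraic hypothesis $x^{{\bf u}^+}-x^{{\bf u}^-}\in J$ by a concrete chain of fiber elements, and then observing that the strictly--smaller--degree requirement on the generators is exactly what promotes the weak inequalities ${\bf r}_{t-1}\ge{\bf u}_t^+$, available for any walk, to the strict inequalities demanded by strong semiconformality; one must also verify properness and rule out a chain of length one. It is worth noting that this is where the distinction between strong semiconformality with $l=2$ and with general $l$ becomes essential, in line with Example~\ref{2_not_enough}: the walk furnished by the correspondence may have length $s>2$ and need not be collapsible to two steps.
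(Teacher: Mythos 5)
Your proof is correct and follows essentially the same route as the paper's: reduce via the $\MA$-degree criterion to membership in the ideal $J$ generated by binomials of strictly smaller $\MA$-degree, prove the forward direction by the telescoping identity $x^{{\bf u}^+}-x^{{\bf u}^-}=\sum_i x^{{\bf p}_{i-1}-{\bf u}_i^+}(x^{{\bf u}_i^+}-x^{{\bf u}_i^-})$, and prove the converse by converting membership into a chain ${\bf u}^+={\bf r}_0,\ldots,{\bf r}_s={\bf u}^-$ of fiber elements. The only cosmetic difference is that the paper imports the chain (with the monomial coefficients already nonconstant) from \cite[Proposition 3.11]{CTV}, whereas you take the standard walk correspondence and then upgrade ${\bf r}_{t-1}\ge{\bf u}_t^+$ to a strict inequality using the smaller-degree hypothesis, which is a valid and slightly more self-contained way to get the same strictness.
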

\begin{proof}
 Let ${\bf u}\in {\MM}(\MA)$ and suppose that ${\bf u}=_{ssc} {\bf u}_1+\cdots +{\bf u}_l$. We consider the binomials $B=x^{{\bf u}^{+}}-x^{{\bf u}^-}$ and $B_i=x^{{\bf u}_i^+}-x^{{\bf u}_i^-}$ for $i=1,\ldots, l$. It is immediate that
\[
B= x^{{\bf u}^+-{\bf u}_1^+} B_1+x^{{\bf u}^+-{{\bf u}_1}-{\bf u}_2^+} B_2+ \cdots + x^{{\bf u}^+-\sum_{i=1}^{l-1} {\bf u}_i-{\bf u}_l^+}B_l.
\]
The  strongly semiconformality assumption implies that the coefficients in the above expression are all non-constant monomials. Since $B_i\in I_{\MA}$ for $i=1,\ldots, l$ it follows that $B$ is in the ideal generated by the binomials of $I_{\MA}$ of strictly smaller $\MA$-degree than $B$. By \cite[Section 1.3]{DSS}, $B$ can not be part of any minimal system of generators of $I_{A}$, a contradiction.

Suppose now that ${\bf u}\in \ML({\MA})\setminus\{\bf 0\}$ and that ${\bf u}\not\in\MM(\MA)$. It follows that $B=x^{{ {\bf u}}^{+}}-x^{{{\bf u}}^-}$ is in the ideal generated by the binomials of $I_{\MA}$ of strictly smaller $\MA$-degree than $B$.  By \cite[Proposition 3.11]{CTV},  there are monomials $x^{{\bf t}_i}\neq 1$ and binomials $B_i=x^{{{\bf u}_i}^+}- x^{{ {\bf u}_i}^-}$ for $i=1,\ldots, l$ where ${\bf u}_i\in \ML({\MA})$  such that
\[ 
B=x^{{\bf t}_1} B_1+\cdots+x^{{\bf t}_l} B_l\ ,
\]
\[
x^{{\bf t}_1}x^{\bf u_1^+}=x^{\bf u^+},\;\; x^{{\bf t}_i}x^{\bf u_i^+}=x^{{\bf t}_{i-1}} x^{\bf u_{i-1}^-}, {\text{ for }} \; i=2,\ldots, l, \textrm{ and } x^{{\bf t}_l} x^{\bf u_l^-}=x^{\bf u^-}\ . 
\] 
Note that the binomials $B_i$ in this expression need not be distinct. Thus ${\bf u}={\bf u}_1+\cdots+{\bf u}_l$ and
\[
{\bf t}_1={\bf u}^+-{\bf u}_1^+, \ {\bf t}_i= {\bf u}^+ -(\sum_{j=1}^{i-1} {\bf u}_j)-{\bf u}_i^+,\; i=2,\ldots, l \ .
\]
Since  ${\bf t}_i>{\bf 0}$ for $i=1,\ldots, l$ it follows that ${\bf u}=_{ssc}{\bf u}_1+{\bf u}_2+\cdots+{\bf u}_l$.
\end{proof}

The following example shows that it is necessary to define the strongly semiconformality in 
terms of $l$ vectors where $l\ge 2$. 

\begin{Example}\label{2_not_enough}
{\em Let $\MA=\{(2,2),(2,0),(0,2),(1,3),(3,3)\}$ be a subset of $\NN^2$. The corresponding matrix 
$A\in\MM_{2\times 5}(\NN)$ is
\[
A=\left( \begin{array}{ccccc}
2 & 0 & 2 & 1 & 3 \\
2 & 2 & 0 & 3 & 3 \\
\end{array} \right).
\]
Using 4ti2 \cite{4ti2} and \cite[Theorem 2.6]{CKT} we get that  
\[
\MM(\MA)=\{(1,-1,-1,0,0), (0,0,1,1,-1), (0,3,1,-2,0), (1,2,0,-2,0)\}.
\]
We consider  the vector ${\bf u}=(2,1,0,-1,-1)\in \ML(\MA)$. We note that ${\bf u}^+=(2,1,0,0,0)$,
${\bf u}^-=(0,0,0,1,1)$ and $\deg_{\MA}({\bf u})=(4,6)$. 
It can be easily seen that 
\[
\mathcal F_{{\bf u}}=\{(2,1,0,0,0),(1,2,1,0,0),(0,3,2,0,0),(0,0,1,2,0),(0,0,0,1,1)\}.
\]
We denote the elements of $\mathcal F_{\bf u}$  by ${\bf v}_1,\ldots,{\bf v}_5$ written in the above order.     It is straightforward that
\[
{\bf u}=_{ssc} {\bf u}_1+{\bf u}_2+{\bf u}_3,
\]   
where ${\bf u}_1={\bf v}_1-{\bf v}_2$, ${\bf u}_2={\bf v}_2-{\bf v}_4$, ${\bf u}_3={\bf v}_4-{\bf v}_5$ $\in\ML(\MA)$. Thus according to Theorem~\ref{universalmarkov}, ${\bf u}\notin \MM(\MA)$. It is interesting to note that  
${\bf u}$ does not have a proper strongly semiconformal decomposition into two vectors. Indeed, if ${\bf u}={\bf w}_1+_{ssc}{\bf w}_2$ then   $ {\bf u}^+-{\bf w}_1={\bf u}^-+{\bf w}_2\in \mathcal F_{{\bf u}}$ and cannot equal ${\bf v}_1= {\bf u}^+$ nor $ {\bf v}_5={\bf u}^-$. Thus $ {\bf u}^+-{\bf w}_1$ is
either ${\bf v}_2,{\bf v}_3$ or ${\bf v}_4$. This implies that ${\bf w}_1={\bf v}_1-{\bf v}_i$ and ${\bf w}_2={\bf v}_i-{\bf v}_5$ for an integer $i\in\{2,3,4\}$. But for $i\in\{2,3\}$ we have ${\bf w}_2^-={\bf v}_5={\bf u}^-$, while if $i=4$ then ${\bf w}_1^+={\bf v}_1={\bf u}^+$, which leads to a contradiction in either case since ${\bf u}={\bf w}_1+_{ssc}{\bf w}_2$. 
}
\end{Example} 
\medskip

Next, for $r\ge 2$ we consider the  Lawrence lifting $\MA^{(r)}$ of $\MA$.  The following remark allows us to restrict the discussion of  semiconformal decompositions  of elements of  $\ML(\MA^{(r)})$ to  semiconformal  decompositions  of elements of maximum  type $r$.

\begin{Remark}\label{declaw}
{\em Let $r\geq 2$, ${\bf v}\in\ML(\MA^{(r)})$  be a nonzero vector of type $s< r$ and  
without loss of generality assume that the nonzero rows of ${\bf v}$ are the first $s$ rows. Note that $s\geq 2$. We define ${\bf w}\in\ML(\MA^{(s)})$ as the vector whose $i^{th}$ row is just the $i^{th}$ row of ${\bf v}$ for   $i=1,\ldots,s$. Then ${\bf v}$ admits a proper semiconformal decomposition if and only if ${\bf w}\in\ML(\MA^{(s)})$ admits a proper semiconformal decomposition. The equivalence is immediate since the unique semiconformal decomposition of ${\bf 0}$ is into zero vectors. The above discussion holds for proper strongly semiconformal decompositions ${\bf v}$ as well. }
\end{Remark}

\noindent Let ${\bf v}=({\bf v}_i) \in \ML(\MA^{(r)})$. If ${\bf z}=[{\bf v}_1\  -{\bf v}_1\ {\bf 0}\ \ldots\ {\bf 0}]^T$ and
${\bf w}=[{\bf 0}\  {\bf v}_2+{\bf v}_1\ {\bf v}_3\ \ldots\ {\bf v}_r]^T$, it is clear that ${\bf v}={\bf z}+{\bf w}$ 
and  that $\bf z$ and $\bf w$ are also
in $ \ML(\MA^{(r)})$. The following lemma comments on the semiconformality of the 
decomposition of ${\bf v}$ into these two vectors.  
   
\begin{Lemma}\label{sc=ssc} Let 
${\bf v}\in \ML(\MA^{(r)})$, $\bf z$, $\bf w$ be as above. If $\bf v$ has type greater than or equal to $3$ and
${\bf v}={\bf z}+_{sc}{\bf w}$ is proper then ${\bf v}={\bf z}+_{ssc}{\bf w}$. Similarly if
${\bf v}={\bf w}+_{sc}{\bf z}$ is proper then ${\bf v}={\bf w}+_{ssc}{\bf z}$.
\end{Lemma}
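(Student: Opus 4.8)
The plan is to exploit the characterizations already recorded in the excerpt: for a two--summand decomposition one has ${\bf u}={\bf a}+_{sc}{\bf b}$ exactly when ${\bf u}^+\geq {\bf a}^+$ and ${\bf u}^-\geq {\bf b}^-$, while ${\bf u}={\bf a}+_{ssc}{\bf b}$ exactly when ${\bf u}^+> {\bf a}^+$ and ${\bf u}^-> {\bf b}^-$ (the latter equivalence being immediate from the $l=2$ definition, by reversing the computation in the remark that follows it). Thus in each of the two assertions I already possess the two componentwise inequalities ``$\geq$'' coming from the assumed semiconformality, and I only need to promote each of them to a strict ``$>$'', i.e.\ to exhibit a single coordinate where the two sides differ.

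First I would record the positive and negative parts of the three vectors row by row. With ${\bf v}=[{\bf v}_1\ \ldots\ {\bf v}_r]^T$ the definitions give ${\bf z}^+=[{\bf v}_1^+\ {\bf v}_1^-\ {\bf 0}\ \ldots\ {\bf 0}]^T$, ${\bf z}^-=[{\bf v}_1^-\ {\bf v}_1^+\ {\bf 0}\ \ldots\ {\bf 0}]^T$, ${\bf w}^+=[{\bf 0}\ ({\bf v}_1+{\bf v}_2)^+\ {\bf v}_3^+\ \ldots\ {\bf v}_r^+]^T$, and ${\bf w}^-=[{\bf 0}\ ({\bf v}_1+{\bf v}_2)^-\ {\bf v}_3^-\ \ldots\ {\bf v}_r^-]^T$, whereas ${\bf v}^\pm$ has $i$th row ${\bf v}_i^\pm$. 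The one genuine input is the standing fact $\ML(\MA)\cap\NN^n=\{{\bf 0}\}$, which forces every nonzero row ${\bf v}_i\in\ML(\MA)$ to satisfy both ${\bf v}_i^+\neq{\bf 0}$ and ${\bf v}_i^-\neq{\bf 0}$ (a nonnegative, respectively nonpositive, lattice vector is zero).

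With this bookkeeping the strictness follows from two observations, used symmetrically in the two statements. Properness forces ${\bf z}\neq{\bf 0}$, hence ${\bf v}_1\neq{\bf 0}$, hence ${\bf v}_1^+\neq{\bf 0}$ and ${\bf v}_1^-\neq{\bf 0}$; comparing first rows, where ${\bf w}^\pm$ vanishes, separates ${\bf v}^-$ from ${\bf w}^-$ in the decomposition ${\bf v}={\bf z}+_{sc}{\bf w}$ and separates ${\bf v}^+$ from ${\bf w}^+$ in ${\bf v}={\bf w}+_{sc}{\bf z}$. The hypothesis that ${\bf v}$ has type at least $3$ supplies a nonzero row ${\bf v}_i$ with $i\geq 3$, hence ${\bf v}_i^+\neq{\bf 0}$ and ${\bf v}_i^-\neq{\bf 0}$; comparing $i$th rows, where ${\bf z}^\pm$ vanishes, separates ${\bf v}^+$ from ${\bf z}^+$ and separates ${\bf v}^-$ from ${\bf z}^-$. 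Combining each separation with the matching ``$\geq$'' supplied by semiconformality yields the two strict inequalities defining a strongly semiconformal decomposition, in both assertions.

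I expect no real obstacle beyond keeping the row indices straight; the whole argument is a coordinatewise comparison, and the lattice condition $\ML(\MA)\cap\NN^n=\{{\bf 0}\}$ is what upgrades ``nonzero row'' to ``nonzero positive part and nonzero negative part,'' the single nontrivial ingredient. The only point deserving a remark is where each hypothesis is used: properness produces the strict inequality coming from row $1$, while type $\geq 3$ produces the one coming from a row beyond the first two, precisely the rows on which ${\bf z}$ is unsupported. This also explains why type $2$ is excluded, since there one typically obtains ${\bf w}={\bf 0}$, so no proper decomposition of this shape exists and the statement is vacuous.
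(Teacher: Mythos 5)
Your argument is correct and is essentially the paper's proof: the paper also reduces strong semiconformality to ruling out ${\bf v}^+={\bf z}^+$ and ${\bf v}^-={\bf w}^-$, using a nonzero row of index $\geq 3$ (from type $\geq 3$) and the nonzero first row (from properness) together with $\ML(\MA)\cap\NN^n=\{{\bf 0}\}$, merely phrased as a contradiction rather than as a direct coordinatewise separation. No gaps.
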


\begin{proof} Suppose that 
 ${\bf v}={\bf z}+_{sc}{\bf w}$ is proper semiconformal. Assume by contradiction that ${\bf v}$ is not the strongly semiconformal sum of ${\bf z}$ and ${\bf w}$. It follows that ${\bf v}^+={\bf z}^+$ or ${\bf v}^-={\bf w}^-$. If ${\bf v}^+={\bf z}^+$ then we obtain that ${\bf v}_i^+={\bf 0}$ for all $i\geq 3$ and thus since ${\bf v}_i\in\ML(\MA)$ we have ${\bf v}_i={\bf 0}$ for all $i\geq 3$. Therefore  the type of $\bf v$ is less than three, a contradiction. Hence ${\bf v}^-={\bf w}^-$ and consequently ${\bf v}_1^-={\bf 0}$. This implies that ${\bf v}_1={\bf 0}$ and thus ${\bf z}={\bf 0}$, a contradiction. The assertion about the second decomposition is proved similarly. 
\end{proof}

\medskip

\section{Markov complexity for monomial curves in $\AA^3$ which are not complete intersections }

In this section we study Lawrence liftings of  monomial curves in $\AA^3$ whose corresponding toric ideals are not complete intersections. Suppose  that $\MA=\{ n_1, n_2,$ $ n_3\}$ $\subset\ZZ_{>0}$ is a monomial curve such that $ I_{\MA}$ is not a complete intersection ideal. For $1\le i\le 3$ we let $c_i$ be the smallest element of $\ZZ_{>0}$ such that $c_i n_i=r_{ij}n_j+ r_{ik}n_k$,   $r_{ij},r_{ik}\in\ZZ_{>0}$ with $\{i,j,k\}=\{1,2,3\}$.  Below, we recall the description of the unique Markov basis of $\MA$ given in \cite[Proposition 3.2, Proposition 3.3]{He}. 

\begin{Theorem}\label{notcompint}
Let $\MA=\{n_1,n_2,n_3\}$ be a set of positive integers with $\gcd(n_1,n_2,n_3)=1$, and with the property that $I_{\MA}$ is not a complete intersection ideal. Let ${\bf u}_1=(-c_1, r_{12}, r_{13})$, ${\bf u}_2=(r_{21}, -c_2, r_{23})$,  ${\bf u}_3=(r_{31}, r_{32}, -c_3)$. Then $\MA$ has a unique minimal Markov basis, 
${\mathcal M}(\MA)= \{ {\bf u}_1,{\bf u}_2, {\bf u}_3 \}$ and ${\bf u}_1+{\bf u}_2+{\bf u}_3={\bf 0}$.
\end{Theorem}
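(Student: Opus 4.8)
The plan is to read off the whole statement from Herzog's classification of the defining ideal of a monomial space curve, since the assertion is essentially a translation of \cite[Proposition 3.2, Proposition 3.3]{He} into the language of Markov bases. There are two independent things to establish: that $\{{\bf u}_1,{\bf u}_2,{\bf u}_3\}$ is the unique minimal Markov basis, and that these three vectors sum to zero. For the first, I would begin from Herzog's dichotomy \cite[Theorem 3.8]{He}: since $I_{\MA}$ is assumed not to be a complete intersection, it is minimally generated by exactly three binomials, and by \cite[Proposition 3.2, Proposition 3.3]{He} this minimal generating set is \emph{unique} and consists precisely of $B_i=x^{{\bf u}_i^+}-x^{{\bf u}_i^-}$ for $i=1,2,3$, with exponent vectors the stated ${\bf u}_1=(-c_1,r_{12},r_{13})$, ${\bf u}_2=(r_{21},-c_2,r_{23})$, ${\bf u}_3=(r_{31},r_{32},-c_3)$. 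Each lies in $\ML(\MA)$ because $c_in_i=r_{ij}n_j+r_{ik}n_k$ by the very definition of $c_i$ and the $r_{ij}$. Invoking the correspondence of Diaconis and Sturmfels \cite[Theorem 3.1]{DS} between minimal generating sets of $I_{\MA}$ and minimal Markov bases of $\MA$, the uniqueness of the minimal generating set transfers at once to the uniqueness of the minimal Markov basis, yielding $\mathcal{M}(\MA)=\{{\bf u}_1,{\bf u}_2,{\bf u}_3\}$.

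For the sum relation I would simply compute ${\bf u}_1+{\bf u}_2+{\bf u}_3$ coordinatewise: the three entries are $-c_1+r_{21}+r_{31}$, $r_{12}-c_2+r_{32}$, and $r_{13}+r_{23}-c_3$, so the claim ${\bf u}_1+{\bf u}_2+{\bf u}_3={\bf 0}$ is equivalent to the three identities $c_1=r_{21}+r_{31}$, $c_2=r_{12}+r_{32}$, $c_3=r_{13}+r_{23}$. These are exactly the relations among the $c_i$ and $r_{ij}$ recorded by Herzog in the non-complete-intersection case; they express the Hilbert--Burch (determinantal) structure of the ideal, in which each variable $x_i$ occurs with its full exponent $c_i$ in one generator and is split as $r_{ji}+r_{ki}$ across the other two. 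Once these identities are cited, the verification is immediate.

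I expect the genuine difficulty to lie not in this proof but in the result it invokes: Herzog's theorem that a non-complete-intersection monomial space curve has a unique minimal generating set of exactly three binomials of the prescribed shape, together with the exponent identities $c_i=r_{ji}+r_{ki}$. Granting \cite[Theorem 3.8, Propositions 3.2 and 3.3]{He}, the only work remaining here is the coordinatewise check above and the formal passage from generators to Markov bases, both of which are routine.
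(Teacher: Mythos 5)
Your proposal is correct and coincides with what the paper does: the theorem is stated there without proof, explicitly as a recollection of Herzog's \cite[Propositions 3.2 and 3.3]{He} (together with \cite[Theorem 3.8]{He} for the trichotomy), combined with the standard Diaconis--Sturmfels correspondence between minimal binomial generating sets and minimal Markov bases. Your additional coordinatewise verification of ${\bf u}_1+{\bf u}_2+{\bf u}_3={\bf 0}$ via the identities $c_i=r_{ji}+r_{ki}$ is exactly the intended (and correct) justification.
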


It follows immediately from Theorem~\ref{notcompint} that $\MM(\MA)=\MS(\MA)$. Another way to see that is applying \cite[Remark 4.4.3]{PS}, since $I_{\MA}$ is a generic lattice ideal.

To any vector ${\bf u}\in\ZZ^3$ we assign a sign-pattern: we put   $+$ if the coordinate of ${\bf u}$ is positive, $-$ if the coordinate of ${\bf u}$ is  negative and  $0$ if the corresponding coordinate  is $0$. When we don't know exactly the sign of the coordinate   we use the symbol $*$.
For example the vector $(0, 2, -3)$ has the sign-pattern $0+-$ while the elements ${\bf u}_1,{\bf u}_2,{\bf u}_3$ of ${\mathcal M}(\MA)$  have   sign-patterns $-++$, $+-+$ and $++-$. We note that when  $\alpha,\beta\in\ZZ_{>0}$ the vectors $\alpha(-{\bf u}_1)$, $\beta{\bf u}_2$ and $\alpha(-{\bf u}_1)+\beta{\bf u}_2$ have sign-patterns $+--,+-+$ and $+-*$. By looking at the sign patterns, it is immediate that   $\alpha(-{\bf u}_1)+\beta{\bf u}_2=\alpha(-{\bf u}_1)+_{ssc}\beta{\bf u}_2$. We generalize and isolate this remark.

\begin{Remark}
\label{semiconf_sum}
{\em Let  $i\neq j\in\{1,2,3\}$ and   $\alpha,\beta\in\ZZ_{>0}$. Then    $\alpha (-{\bf u}_i)+\beta {\bf u}_j=\alpha (-{\bf u}_i)+_{ssc}\beta {\bf u}_j$.}
\end{Remark}

\noindent The geometry of the plane implies the following lemma. 
\begin{Lemma}\label{uniquesemisum}
Let $\MA=\{n_1,n_2,n_3\}$ be such that $I_{\MA}$ is not a complete intersection. Let ${\bf 0}\neq {\bf v}\in\ML(\MA)$.
Then  either ${\bf v}=\alpha (\pm {\bf u}_i)$ for an $\alpha\in\ZZ_{>0}$ and $i\in\{1,2,3\}$ or ${\bf v}=\alpha (-{\bf u}_i) +_{ssc} \beta {\bf u}_j$ for some $\alpha,\beta\in\ZZ_{>0}$ and $i\neq j \in\{1,2,3\}$.  
\end{Lemma}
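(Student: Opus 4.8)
The plan is to work entirely inside the rank-two lattice $\ML({\MA})$, which lies in the plane $P=\{{\bf x}\in\RR^3:\ n_1x_1+n_2x_2+n_3x_3=0\}$, and to reduce the statement to an elementary picture of how the six rays $\pm{\bf u}_1,\pm{\bf u}_2,\pm{\bf u}_3$ subdivide $P$. First I would record two preliminary facts about ${\bf u}_1,{\bf u}_2,{\bf u}_3$. From the sign-patterns $-++$, $+-+$, $++-$ no two of the ${\bf u}_i$ are scalar multiples of one another, so any two of them are $\QQ$-linearly independent and, since $\ML({\MA})$ has rank two, each pair spans $P$. Moreover, by Theorem~\ref{notcompint} the set $\{{\bf u}_1,{\bf u}_2,{\bf u}_3\}$ is a Markov basis, so it generates $\ML({\MA})$ as a group (connecting ${\bf b}^+$ to ${\bf b}^-$ writes any ${\bf b}\in\ML({\MA})$ as a $\ZZ$-combination of the ${\bf u}_i$), and the relation ${\bf u}_1+{\bf u}_2+{\bf u}_3={\bf 0}$ lets me discard any one generator. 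Hence each pair $\{{\bf u}_i,{\bf u}_j\}$, $i\neq j$, is a generating set of size two for a free abelian group of rank two, so the induced surjection $\ZZ^2\to\ML({\MA})$ is an isomorphism and $\{{\bf u}_i,{\bf u}_j\}$ is a $\ZZ$-basis; in particular every ${\bf u}_i$ is primitive, and $\{-{\bf u}_i,{\bf u}_j\}$ is again a $\ZZ$-basis whose non-negative integer span is exactly the set of lattice points of the (unimodular) cone it spans.

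Next I would pin down the cyclic arrangement of the six rays. Because ${\bf u}_i,{\bf u}_j$ are linearly independent, $-{\bf u}_k={\bf u}_i+{\bf u}_j$ is a strictly positive combination of them, so the ray $\RR_{\ge0}(-{\bf u}_k)$ lies in the open cone spanned by ${\bf u}_i$ and ${\bf u}_j$. Reading this off for $k=1,2,3$ shows that, in cyclic order around $P$, the signs alternate, namely
\[
{\bf u}_1,\ -{\bf u}_3,\ {\bf u}_2,\ -{\bf u}_1,\ {\bf u}_3,\ -{\bf u}_2 .
\]
These six rays divide $P$ into six closed cones, and by the alternation each of the six open sectors is bounded by exactly one ray $\RR_{\ge0}{\bf u}_j$ and one ray $\RR_{\ge0}(-{\bf u}_i)$ with $i\neq j$.

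Finally I would classify an arbitrary ${\bf 0}\neq{\bf v}\in\ML({\MA})$. If ${\bf v}$ lies on one of the six rays then ${\bf v}=\gamma(\pm{\bf u}_i)$ for some $\gamma\in\QQ$, and primitivity of ${\bf u}_i$ forces $\gamma\in\ZZ$, giving the first alternative ${\bf v}=\alpha(\pm{\bf u}_i)$ with $\alpha\in\ZZ_{>0}$. Otherwise ${\bf v}$ lies in the interior of one of the six sectors, bounded by $-{\bf u}_i$ and ${\bf u}_j$; expanding ${\bf v}$ in the $\ZZ$-basis $\{-{\bf u}_i,{\bf u}_j\}$ gives ${\bf v}=\alpha(-{\bf u}_i)+\beta{\bf u}_j$ with $\alpha,\beta\in\ZZ$, and since ${\bf v}$ is in the open cone its (real, hence integer) coefficients are strictly positive, so $\alpha,\beta\in\ZZ_{>0}$. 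An appeal to Remark~\ref{semiconf_sum} then upgrades this equality to ${\bf v}=\alpha(-{\bf u}_i)+_{ssc}\beta{\bf u}_j$, which is the second alternative.

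The only genuinely delicate point — and the step I expect to be the main obstacle — is the integrality running through the whole argument: it is the unimodularity of each two-dimensional cone, equivalently that every pair $\{{\bf u}_i,{\bf u}_j\}$ is a $\ZZ$-basis rather than merely a $\QQ$-basis, that forces $\alpha,\beta$ to be positive \emph{integers} and forces $\gamma$ to be an integer on the rays. This is precisely where the Markov-basis property supplied by Theorem~\ref{notcompint} enters, and everything else is the plane geometry of three vectors summing to zero.
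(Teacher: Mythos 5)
Your proof is correct and takes essentially the same route as the paper's: both decompose the plane spanned by ${\bf u}_1,{\bf u}_2$ into the complete pointed fan determined by the six rays $\pm{\bf u}_1,\pm{\bf u}_2,\pm{\bf u}_3$, place ${\bf v}$ either on a ray or in the interior of a two-dimensional sector bounded by some $-{\bf u}_i$ and ${\bf u}_j$, and then invoke Remark~\ref{semiconf_sum}. The only difference is that you spell out why the coefficients are positive \emph{integers} (each pair $\{{\bf u}_i,{\bf u}_j\}$ is a $\ZZ$-basis of $\ML(\MA)$, so each sector is unimodular and each ${\bf u}_i$ is primitive), a point the paper treats as immediate.
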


\begin{proof} Since ${\bf u}_1+{\bf u}_2+{\bf u}_3={\bf 0}$ and $\ML(\MA)=\ZZ{\bf u}_1+\ZZ{\bf u}_2+\ZZ{\bf u}_3$ it follows that $\rank_{\ZZ}(\ML(\MA))=2$. Therefore the three vectors ${\bf u}_1,{\bf u}_2,{\bf u}_3$ define a complete pointed polyhedral fan of the plane, $\span_{\RR}({\bf u}_1,{\bf u}_2)$. This polyhedral fan has thirteen non-empty faces. Other than  the vertex $\{\bf 0\}$,  six of the faces are   one-dimensional: $\RR_{+}{\bf u}_1$, $\RR_{+}(-{\bf u}_3)$, $\RR_{+}{\bf u}_2$, $\RR_{+}(-{\bf u}_1)$, $\RR_{+}{\bf u}_3$ and $\RR_{+}(-{\bf u}_2)$. The rest  are two-dimensional: $\RR_{+}\{-{\bf u}_3,{\bf u}_1\}$, $\RR_{+}\{-{\bf u}_3,{\bf u}_2\}$, $\RR_{+}\{-{\bf u}_1,{\bf u}_2\}$, $\RR_{+}\{-{\bf u}_1,{\bf u}_3\}$, $\RR_{+}\{-{\bf u}_2,{\bf u}_3\}$ and 
$\RR_{+}\{-{\bf u}_2,$ ${\bf u}_1\}$.  

Let ${\bf v}\in\ML(\MA)$ be a nonzero vector. Since ${\bf v}$ belongs to the polyhedral fan then it follows at once that ${\bf v}$ belongs either to one of the six one-dimensional cones, that is ${\bf v}=\alpha (\pm {\bf u}_i)$ for some $\alpha\in\ZZ_{>0}$ and $i\in\{1,2,3\}$, or ${\bf v}$ belongs to the interior of one of the six two-dimensional cones, that is $\alpha (-{\bf u}_i)+\beta {\bf u}_j$ for some $\alpha,\beta\in \ZZ_{>0}$ and $i\neq j\in\{1,2,3\}$. It is immediate, by Remark~\ref{semiconf_sum}, that this sum determines a proper strongly semiconformal decomposition of ${\bf v}$. 
\end{proof}

 In the following lemma we consider a subset  $T$  of $\ML(\MA^{(r)})$. The elements of $T$ have type $2$ and $3$.
In Theorem~\ref{markovnotci} we will see that $T$ equals  the universal Markov basis of $\MA^{(r)}$.

\begin{Lemma}\label{indispensablemon}
Let $\MA=\{n_1,n_2,n_3\}$ be such that $I_{\MA}$ is not a complete intersection. Let $r\geq 3$  and let $T$ be the subset of $\ML(\MA^{(r)})$ containing all 
vectors of type $2$ whose nonzero rows are of the form ${\bf u},-{\bf u}$, with ${\bf u}\in \MG(\MA)$ and all vectors of type $3$ whose  nonzero rows  are permutations of ${\bf u}_1,{\bf u}_2,{\bf u}_3$. Then $T\subset\MS(\MA^{(r)})$. Moreover  $|T|=k\binom{r}{2}+6\binom{r}{3}$, where $k$ is the cardinality of the Graver basis of $\MA$.  
\end{Lemma}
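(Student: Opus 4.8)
The plan is to prove the two assertions separately, handling the containment $T\subset\MS(\MA^{(r)})$ through Proposition~\ref{indispensabledec}: it suffices to show that no ${\bf v}\in T$ admits a proper semiconformal decomposition. The structural tool is that semiconformality is a coordinatewise condition, so a decomposition ${\bf v}={\bf z}+_{sc}{\bf w}$ in $\ML(\MA^{(r)})$ holds if and only if ${\bf v}_m={\bf z}_m+_{sc}{\bf w}_m$ in $\ML(\MA)$ for every row $m$, subject to the lattice constraints $\sum_m{\bf z}_m=\sum_m{\bf w}_m={\bf 0}$. In particular, in every row with ${\bf v}_m={\bf 0}$ we must have ${\bf z}_m={\bf w}_m={\bf 0}$, since ${\bf 0}$ has no proper semiconformal decomposition; this is the row-discarding of Remark~\ref{declaw}, and it lets me concentrate on the two or three nonzero rows.

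For a vector ${\bf v}\in T$ of type $3$, whose three nonzero rows are ${\bf u}_1,{\bf u}_2,{\bf u}_3$ in some order, each ${\bf u}_i$ lies in $\MS(\MA)$ by Theorem~\ref{notcompint} (together with the observation recorded there that $\MM(\MA)=\MS(\MA)$), hence has no proper semiconformal decomposition. Thus in each nonzero row one of ${\bf z}_m,{\bf w}_m$ vanishes, so the set $S$ of nonzero rows of ${\bf z}$ picks out a subset of $\{{\bf u}_1,{\bf u}_2,{\bf u}_3\}$ summing to ${\bf 0}$ (from $\sum_m{\bf z}_m={\bf 0}$), while its complement also sums to ${\bf 0}$ (from $\sum_m{\bf w}_m={\bf 0}$). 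Properness forces $S$ to be a proper nonempty subset; but no proper nonempty sub-sum of $\{{\bf u}_1,{\bf u}_2,{\bf u}_3\}$ is zero, because a single ${\bf u}_i$ is nonzero and a pair equals $-{\bf u}_k\neq{\bf 0}$ by ${\bf u}_1+{\bf u}_2+{\bf u}_3={\bf 0}$. This contradiction gives ${\bf v}\in\MS(\MA^{(r)})$.

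For a vector ${\bf v}\in T$ of type $2$, with nonzero rows ${\bf u},-{\bf u}$ and ${\bf u}\in\MG(\MA)$, the summand ${\bf u}$ may itself admit proper semiconformal decompositions, so the previous argument does not apply directly. The lattice constraint forces ${\bf z}_2=-{\bf z}_1$ and ${\bf w}_2=-{\bf w}_1$, so the two rows read ${\bf u}={\bf z}_1+_{sc}{\bf w}_1$ and $-{\bf u}=(-{\bf z}_1)+_{sc}(-{\bf w}_1)$. Applying the sign-reversal equivalence ${\bf u}={\bf z}_1+_{sc}{\bf w}_1\Leftrightarrow -{\bf u}=(-{\bf w}_1)+_{sc}(-{\bf z}_1)$, I obtain $-{\bf u}$ as a semiconformal sum in both orders, whence ${\bf u}={\bf z}_1+_{c}{\bf w}_1$ is conformal. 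Since ${\bf u}\in\MG(\MA)$ has no proper conformal decomposition, one of ${\bf z}_1,{\bf w}_1$ vanishes, forcing ${\bf z}={\bf 0}$ or ${\bf w}={\bf 0}$, so the decomposition is improper. This is the mechanism behind the classical fact that the indispensable set of the Lawrence lifting $\MA^{(2)}$ is $\{[{\bf u},-{\bf u}]:{\bf u}\in\MG(\MA)\}$.

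For the cardinality I count $T$ up to sign, consistently with the convention for $\MS$. A type-$2$ class is determined by a $2$-subset of the $r$ rows together with a sign class $\{{\bf u},-{\bf u}\}$ of $\MG(\MA)$, giving $k\binom{r}{2}$; a type-$3$ class is determined by a $3$-subset of the rows together with one of the $3!=6$ assignments of ${\bf u}_1,{\bf u}_2,{\bf u}_3$ to those rows, giving $6\binom{r}{3}$, and since the negative of such a vector has rows $-{\bf u}_i$ and is never again a permutation of the ${\bf u}_i$, these $6\binom{r}{3}$ classes are distinct. As the two types are disjoint, $|T|=k\binom{r}{2}+6\binom{r}{3}$. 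I expect the type-$2$ step---upgrading a semiconformal decomposition to a conformal one by exploiting the paired opposite rows---to be the main subtlety; the type-$3$ analysis and the counting are bookkeeping once the row-decoupling of semiconformality is in place.
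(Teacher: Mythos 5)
Your proposal is correct and follows essentially the same route as the paper: reduce to the nonzero rows (Remark~\ref{declaw}), upgrade the semiconformal decomposition of a type-$2$ element to a conformal one via the sign-reversal identity to contradict ${\bf u}\in\MG(\MA)$, and for type-$3$ elements use that each ${\bf u}_i\in\MS(\MA)$ together with the fact that no proper nonempty sub-sum of $\{{\bf u}_1,{\bf u}_2,{\bf u}_3\}$ vanishes. Your counting argument is in fact slightly more explicit than the paper's, which dismisses the cardinality claim as immediate.
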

\begin{proof}
We will show  that $T\subset \MS(\MA^{(r)})$, the last part of the conclusion being immediate. By Remark~\ref{declaw} we may assume that $r=3$.  We first show that the element of type 2  
\[
\begin{bmatrix}  {\bf u}\cr -{\bf u}\cr {\bf 0}\end{bmatrix}, {\bf u}\in \MG(\MA)
\]
is indispensable. Suppose that is not. Then by Proposition~\ref{indispensabledec} it admits a proper semiconformal decomposition, which is of the following form due to  Remark~\ref{declaw}  
\[ 
\begin{bmatrix}  {\bf u}\cr -{\bf u}\cr {\bf 0}\end{bmatrix}= \begin{bmatrix}  {\bf v}_1\cr -{\bf v}_1\cr {\bf 0}\end{bmatrix}+_{sc} \begin{bmatrix}  {\bf v}_2\cr -{\bf v}_2\cr {\bf 0}\end{bmatrix}\ . 
\]
Thus ${\bf u}={\bf v}_1+_{sc} {\bf v}_2$ and $-{\bf u}=-{\bf v}_1+_{sc} (-{\bf v}_2)$. But ${\bf u}={\bf v}_1+_{sc} {\bf v}_2$  also implies that $-{\bf u}=-{\bf v}_2+_{sc} {\bf v}_1$ and thus ${\bf u}={\bf v}_1+_{c} {\bf v}_2$. Thus ${\bf u}$ is not in the Graver basis of $\MA$, a contradiction. Therefore all elements of type 2 whose nonzero rows belong to the Graver basis of $\MA$ are indispensable. Next we prove that the element of type 3 
\[ 
\begin{bmatrix}  {\bf u}_1\cr {\bf u}_2\cr {\bf u}_3\end{bmatrix}
\]
is indispensable. Arguing by contradiction and applying again Proposition~\ref{indispensabledec}, it would imply that the vector has the following  proper semiconformal decomposition 
\[
 \begin{bmatrix}  {\bf u}_1\cr {\bf u}_2\cr {\bf u}_3\end{bmatrix}=\begin{bmatrix}  {\bf v}_1\cr {\bf v}_2\cr {\bf v}_3\end{bmatrix}+_{sc}\begin{bmatrix}  {\bf w}_1\cr {\bf w}_2\cr {\bf w}_3\end{bmatrix}. 
\]
Hence we have ${\bf u}_i={\bf v}_i+_{sc}{\bf w}_i$ for $i=1,2,3$. Since ${\bf u}_i\in\MS(\MA)$ it follows that either ${\bf v}_i$ or ${\bf w}_i$ is zero. It follows that one of the vectors $[ {\bf v}_1\ {\bf v}_2\  {\bf v}_3]^T$ or $[ {\bf w}_1\ {\bf w}_2\  {\bf w}_3]^T$ has at least two zero rows, and thus it must be zero. This is a contradiction and consequently all elements of type $3$ with the rows being permutations of ${\bf u}_1, {\bf u}_2, {\bf u}_3$ are indispensable.  
\end{proof} 

In the next lemma we show that a class of elements of $\ML(\MA^{(r)})$ is not part of  $\MM(\MA^{(r)})$. 
 
\begin{Lemma}\label{kindtwo}
Let $\MA=\{n_1,n_2,n_3\}$ be such that $I_{\MA}$ is not a complete intersection. Let $r\geq 3$ and $T\subset\MS(\MA^{(r)})$ be as  in Lemma~\ref{indispensablemon}. Suppose that for $1\leq i\leq r$,  $\alpha_i,\beta_i\in\NN$, 
\[ {\bf 0}\neq {\bf v}=
\begin{bmatrix}  \alpha_1 (-{\bf u}_1) +\beta_1 {\bf u}_2\cr \vdots \cr \alpha_s (-{\bf u}_1) +\beta_s {\bf u}_2\cr \beta_{s+1} (-{\bf u}_2) +\alpha_{s+1} {\bf u}_1\cr \vdots \cr \beta_{r} (-{\bf u}_2) +\alpha_{r} {\bf u}_1 \end{bmatrix} \in \ML(\MA^{(r)})\setminus T\ .
\]
Then $\bf v$ has a proper strongly semiconformal decomposition into summands of $\ML(\MA^{(r)})$ where   one of them has type $2$.   
\end{Lemma}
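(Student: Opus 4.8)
The plan is to reduce, via Lemma~\ref{c_sc_ssc}(i), to producing a single conformal type-$2$ summand. First I record the lattice condition: since the rows of ${\bf v}$ sum to zero and ${\bf u}_1,{\bf u}_2$ are linearly independent, one gets $\sum_{i=1}^{s}\alpha_i=\sum_{i=s+1}^{r}\alpha_i=:\alpha$ and $\sum_{i=1}^{s}\beta_i=\sum_{i=s+1}^{r}\beta_i=:\beta$, with $\alpha+\beta\geq 1$ because ${\bf v}\neq{\bf 0}$. Negating the bottom rows, each row acquires a \emph{profile} ${\bf r}_k\in\RR_{+}\{-{\bf u}_1,{\bf u}_2\}=:C_1$ (namely ${\bf r}_k={\bf v}_k$ for $k\leq s$ and ${\bf r}_k=-{\bf v}_k$ for $k>s$), and $\sum_{k\leq s}{\bf r}_k=\sum_{k>s}{\bf r}_k={\bf P}\neq{\bf 0}$. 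The key reduction is: if some ${\bf u}\in\ML(\MA)\setminus\{{\bf 0}\}$ is a conformal summand of a top profile ${\bf r}_i$ and of a bottom profile ${\bf r}_j$, then the type-$2$ element ${\bf z}$ carrying ${\bf u}$ in row $i$ and $-{\bf u}$ in row $j$ is conformal in ${\bf v}$, so ${\bf v}={\bf z}+_{c}({\bf v}-{\bf z})$ is a proper conformal decomposition (proper because ${\bf v}$ has type $\geq 3$, so ${\bf v}-{\bf z}\neq{\bf 0}$), and Lemma~\ref{c_sc_ssc}(i) upgrades it to ${\bf v}={\bf z}+_{ssc}({\bf v}-{\bf z})$. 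The case where ${\bf v}$ itself has type $2$ is immediate: its single profile ${\bf r}$ is then non-Graver (else ${\bf v}\in T$), so any proper conformal splitting of ${\bf r}$ gives the desired type-$2$ decomposition.

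It thus remains to find a common conformal summand of some top and some bottom profile. Every profile in $C_1$ has first coordinate $\geq 0$ and second coordinate $\leq 0$, so the only coordinate that can obstruct conformality is the third, governed by $\gamma_k:={\bf r}_k(3)=\beta_k r_{23}-\alpha_k r_{13}$. Since ${\bf u}_2(3)=r_{23}>0$ and $(-{\bf u}_1)(3)=-r_{13}<0$, the Graver ray ${\bf u}_2$ is a conformal summand of any profile with $\gamma_k\geq r_{23}$ and $-{\bf u}_1$ of any profile with $\gamma_k\leq -r_{13}$. Hence, if both a top and a bottom profile have $\gamma\geq r_{23}$ I take ${\bf u}={\bf u}_2$, and if both have $\gamma\leq -r_{13}$ I take ${\bf u}=-{\bf u}_1$; in either case ${\bf z}$ even lies in $T$. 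These ray-peels dispose of every configuration in which both sides simultaneously exhibit an extreme profile on one of the two rays.

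The main obstacle is the complementary situation — in particular the genuinely hard case where every $\gamma_k$ lies in the open gap $(-r_{13},r_{23})$, so that all profiles cluster about the third-coordinate-zero ray $\RR_{+}{\bf u}_0$, where ${\bf u}_0=(n_2/g,-n_1/g,0)$ and $g=\gcd(n_1,n_2)$ (itself a Graver element). Two profiles with equal third coordinate $\gamma^{*}$ differ by a multiple of ${\bf u}_0$ and so both conformally contain the minimal profile of that third coordinate; hence whenever a top and a bottom profile share a value $\gamma^{*}$ I peel that minimal profile. What remains — and is the crux I would need to settle — is the possibility that the top and bottom profiles realize \emph{disjoint} third-coordinate values. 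There the balancing $\sum_{k\leq s}{\bf r}_k=\sum_{k>s}{\bf r}_k={\bf P}$ together with the pointedness of $C_1$ must force a shared conformal summand: for sufficiently large profiles ${\bf u}_0$ itself works, and for the extremal minimal configurations I would fall back on the row-merge preceding Lemma~\ref{sc=ssc} — choosing a top row $p$ and a bottom row $q$ with ${\bf r}_q^{-}\geq{\bf r}_p^{-}$ (and the matching third-coordinate inequality), peeling ${\bf z}=[{\bf v}_p\ -{\bf v}_p\ {\bf 0}\ \cdots]^{T}$, checking the one-sided semiconformal inequalities, and invoking Lemma~\ref{sc=ssc} to pass from $+_{sc}$ to $+_{ssc}$. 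Securing such a comparable pair by an extremal choice (smallest top against largest bottom profile in the second coordinate, the two sides having equal total there) is the delicate bookkeeping I expect to be the real work.
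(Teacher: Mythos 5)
Your reduction is sound as far as it goes: peeling a type-$2$ matrix whose nonzero rows are a common conformal summand ${\bf u}$ of a top profile and $-{\bf u}$ against a bottom profile does yield a proper conformal, hence strongly semiconformal, decomposition, and the subcases you actually settle (type $2$; both sides owning a profile with $\gamma\geq r_{23}$ or both with $\gamma\leq -r_{13}$; a shared value of $\gamma$) are correct. But the case you yourself flag as the crux is exactly where the lemma's difficulty lives, and the statement you hope for there is false: a top and a bottom profile need not share \emph{any} nonzero conformal summand. Take $\MA=\{3,4,5\}$, so ${\bf u}_1=(-3,1,1)$, ${\bf u}_2=(1,-2,1)$, and let ${\bf v}\in\ML(\MA^{(3)})\setminus T$ have rows $-{\bf u}_1$, ${\bf u}_2$, ${\bf u}_1-{\bf u}_2$ (so $s=2$, $\alpha=(1,0,1)$, $\beta=(0,1,1)$). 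The bottom profile is $(4,-3,0)$, whose only nonzero conformal summand is itself, and $(4,-3,0)$ is conformal in neither $(3,-1,-1)$ nor $(1,-2,1)$. So the conformal route cannot close the argument, and everything rests on the semiconformal fallback you only sketch. That fallback --- peel $[{\bf v}_p\ \cdots\ -{\bf v}_p\ \cdots]^T$ for a pair with ${\bf r}_q^-\geq{\bf r}_p^-$ --- requires two coordinate inequalities to hold simultaneously, and the extremal choice you describe controls only one of them; you give no argument that such a pair exists. Moreover full-row peels do not suffice in general: when a top row is a pure multiple $\alpha_1(-{\bf u}_1)$ and a bottom row a pure multiple of $-{\bf u}_2$, the decomposition actually needed peels a single generator, $[-{\bf u}_1\ {\bf 0}\ \cdots\ {\bf u}_1\ {\bf 0}]^T$, placing ${\bf u}_1$ into a row where it is neither a conformal summand nor the full row.

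For comparison, the paper's proof avoids profiles and conformality altogether. After handling $s\in\{1,r-1\}$ separately, it assumes $s\leq r-s$, uses the balance $\sum_{i\leq s}\alpha_i=\sum_{j>s}\alpha_j$ to produce $i\leq s<j$ with $\alpha_i\geq\alpha_j$, and then runs explicit subcases on the relative sizes of the $\alpha$'s and $\beta$'s ($\alpha_i=\alpha_j$; $\alpha_i>\alpha_j$ with $\beta_i\geq\beta_j$; $\alpha_i>\alpha_j$, $\beta_i<\beta_j$ with $\alpha_j+\beta_i>0$; and finally $\alpha_j=\beta_i=0$). In each it writes down the type-$2$ piece --- generally the ``componentwise minimum'' $\alpha_j(-{\bf u}_1)+\beta_i{\bf u}_2$ paired with its negative, rather than a full row --- checks semiconformality by sign patterns, and upgrades to strong semiconformality via Lemma~\ref{sc=ssc} or directly. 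That case analysis is precisely the bookkeeping your proposal defers, so as written the proof is incomplete.
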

\begin{proof} First suppose that  ${\bf v}$ is of a type $2$. Since ${\bf v}\not\in T$   it follows that its nonzero rows do not belong to $\MG(\MA)$, and thus ${\bf v}$ admits a proper conformal decomposition. Applying Lemma~\ref{c_sc_ssc}(i) it follows that ${\bf v}$ has also a proper strongly semiconformal decomposition. Next suppose that the type of ${\bf v}$ is greater than or equal to $3$.  By Remark~\ref{declaw} we can assume that the type of ${\bf v}$ is exactly $r$ where $r\geq 3$. We denote the   row vectors of ${\bf v}$ by ${\bf v}_1,\ldots,{\bf v}_r$. We  notice  that $1\leq s<r$. Indeed, if $s=0$ or $s=r$ then we obtain that $(\sum_{i=1}^r \alpha_i){\bf u}_1 - (\sum_{i=1}^r \beta_i){\bf u}_2={\bf 0}$. Since ${\bf u}_1,{\bf u}_2$ are linearly independent this implies that $\sum_{i=1}^r \alpha_i=\sum_{i=1}^r \beta_i=0$. Therefore we have $\alpha_i=\beta_i=0$ for all $i$, which leads to ${\bf v}={\bf 0}$, a contradiction.

Suppose that $s=1$. Since the row vectors of ${\bf v}$ add up to zero and ${\bf u}_1,{\bf u}_2$ are linearly independent, we obtain that $\alpha_1=\sum_{i\geq 2}\alpha_i$ and $\beta_1=\sum_{i\geq 2}\beta_i$. Note that $\alpha_1\neq 0$ or $\beta_1\neq 0$, otherwise we would have ${\bf v}={\bf 0}$, a contradiction. If $\alpha_1=0$ then $\alpha_i=0$ for all $i$.  Let ${\bf z}=[{\bf u}_2\ -{\bf u}_2\ {\bf 0}\ \ldots\ {\bf 0}]^T$. Then ${\bf v}={\bf z}+_{ssc} ({\bf v}-{\bf z})$. The case $\beta_1=0$ is similar. Furthermore, if $\alpha_1,\beta_1\neq {\bf 0}$ we notice that ${\bf v}$ can be decomposed as the sum of two vectors ${\bf z},{\bf w}$ of type $2$ and $r-1$: 
\[
{\bf z}=
\begin{bmatrix}  \alpha_2 (-{\bf u}_1) +\beta_2 {\bf u}_2\cr \beta_{2} (-{\bf u}_2) +\alpha_{2} {\bf u}_1\cr {\bf 0} \cr \vdots \cr {\bf 0} \end{bmatrix}, \  {\bf w}= \begin{bmatrix}  (\sum_{i\geq 3} \alpha_i) (-{\bf u}_1) +(\sum_{i\geq 3}\beta_i) {\bf u}_2\cr {\bf 0} \cr \beta_3 (-{\bf u}_2) + \alpha_3 {\bf u}_1 \cr \vdots \cr \beta_{r} (-{\bf u}_2) +\alpha_{r} {\bf u}_1 \end{bmatrix}.
\]      
The sign-patterns of $\alpha_2 (-{\bf u}_1) +\beta_2 {\bf u}_2$ and $(\sum_{i\geq 3} \alpha_i) (-{\bf u}_1) +(\sum_{i\geq 3}\beta_i) {\bf u}_2$ are of the form $+-*$. Thus ${\bf v}={\bf z}+_{sc}{\bf w}$ or ${\bf v}={\bf w}+_{sc}{\bf z}$. Applying Lemma~\ref{sc=ssc} it follows that ${\bf v}={\bf z}+_{ssc}{\bf w}$ or ${\bf v}={\bf w}+_{ssc}{\bf z}$. 

We remark that if $s=r-1$, then a similar argument holds. Suppose now that    $2\leq s\leq r-2$. Since ${\bf v}\in\ML(\MA^{(r)})$ and ${\bf u}_1,{\bf u}_2$ are linearly independent we have the following relations
\[
\sum_{i=1}^s \alpha_i=\sum_{j=s+1}^r \alpha_j \ \text{ and } \ \sum_{i=1}^s \beta_i=\sum_{j=s+1}^r \beta_j.
\] 
We may assume for the rest of the proof that $s\leq r-s$, otherwise we replace ${\bf v}$ by $-{\bf v}$. Since $s\leq r-s$ there exist $i,j$ with $1\leq i\leq s$ and $s+1\leq j\leq r$ such that $\alpha_i\geq \alpha_j$. For  simplicity of notation we may assume that $i=1$ and $j=r$. First suppose that $\alpha_1=\alpha_r$. If $\beta_1\leq \beta_r$  then 
\[
{\bf v}= \begin{bmatrix}  {\bf 0}\cr \alpha_{2} (-{\bf u}_1) + \beta_{2} {\bf u}_2\cr \vdots\cr \beta_{r-1} (-{\bf u}_2) + \alpha_{r-1} {\bf u}_1 \cr (\beta_r-\beta_1)(-{\bf u}_2)  \end{bmatrix} +_{sc} \begin{bmatrix}  \alpha_1 (-{\bf u}_1) +\beta_1 {\bf u}_2\cr {\bf 0}\cr \vdots\cr {\bf 0} \cr  \beta_{1} (-{\bf u}_2) +\alpha_{r} {\bf u}_1 \end{bmatrix},
\]
otherwise 
\[
{\bf v}= \begin{bmatrix}  \alpha_1 (-{\bf u}_1) +\beta_r {\bf u}_2\cr {\bf 0}\cr \vdots\cr {\bf 0} \cr  \beta_{r} (-{\bf u}_2) +\alpha_{r} {\bf u}_1 \end{bmatrix} +_{sc} \begin{bmatrix} (\beta_1-\beta_r){\bf u}_2 \cr \alpha_{2} (-{\bf u}_1) + \beta_{2} {\bf u}_2\cr \vdots\cr \beta_{r-1} (-{\bf u}_2) + \alpha_{r-1} {\bf u}_1 \cr {\bf 0}  \end{bmatrix}.
\]
In both situations we may apply Lemma~\ref{sc=ssc} and we obtain that ${\bf v}$ has a proper strongly semiconformal decomposition. Next  we examine what happens when $\alpha_1>\alpha_r$.  If $\beta_1\geq\beta_r$ then ${\bf v}$ can be written as a sum of two vectors ${\bf z},{\bf w}$ of types $2$ and $r-1$:
\[
{\bf z}= \begin{bmatrix}  \alpha_r (-{\bf u}_1) +\beta_r {\bf u}_2\cr {\bf 0}\cr \vdots\cr {\bf 0} \cr  \beta_{r} (-{\bf u}_2) +\alpha_{r} {\bf u}_1 \end{bmatrix}, \ {\bf w}=\begin{bmatrix}  (\alpha_1-\alpha_r) (-{\bf u}_1) +(\beta_1-\beta_r) {\bf u}_2\cr \alpha_{2} (-{\bf u}_1) + \beta_{2} {\bf u}_2\cr \vdots\cr \beta_{r-1} (-{\bf u}_2) + \alpha_{r-1} {\bf u}_1 \cr {\bf 0}  \end{bmatrix}.
\]
Since the type of ${\bf v}$ is $r$, the type of ${\bf z}$ is $2$. Moreover since $\alpha_1-\alpha_r>0$ and $\beta_1-\beta_r\geq 0$,   the sign-pattern of $(\alpha_1-\alpha_r) (-{\bf u}_1) +(\beta_1-\beta_r) {\bf u}_2$ is $+-*$, the same as the sign-pattern of $\alpha_r (-{\bf u}_1) +\beta_r {\bf u}_2$. Therefore the two vectors   add up semiconformally either in this order or the the reversed. Hence ${\bf v}={\bf z}+_{sc} {\bf w}$ or ${\bf v}={\bf w}+_{sc} {\bf z}$ and applying Lemma~\ref{sc=ssc} we see that  the  sums are also strongly semiconformal. The last case to  consider is when  $\beta_1<\beta_r$. If $\alpha_r+\beta_1>0$ then ${\bf v}$ can be decomposed as the semiconformal sum of  ${\bf w},{\bf z}$ where  
\[{\bf w}=
 \begin{bmatrix}  (\alpha_1-\alpha_r) (-{\bf u}_1) \cr \alpha_{2} (-{\bf u}_1) + \beta_{2} {\bf u}_2\cr \vdots\cr \beta_{r-1} (-{\bf u}_2) + \alpha_{r-1} {\bf u}_1 \cr (\beta_r-\beta_1)(-{\bf u}_2) \end{bmatrix} , \ {\bf z}=\begin{bmatrix}  \alpha_r (-{\bf u}_1) +\beta_1 {\bf u}_2\cr {\bf 0}\cr \vdots\cr {\bf 0} \cr  \beta_1 (-{\bf u}_2) +\alpha_{r} {\bf u}_1 \end{bmatrix}.  
\]  
Indeed, the the sign-patterns of the first and last row vector of each summand force the sum to be semiconformal. We show that  the sum is also strongly semiconformal. Indeed, if ${\bf v}^-={\bf z}^-$ then ${\bf v}_k^-={\bf 0}$  and thus ${\bf v}_k={\bf 0}$ for all $k$ with $2\leq k\leq r-1$, a contradiction since the type of ${\bf v}$ is bigger than $2$. On the other hand, if ${\bf v}^+={\bf w}^+$ then ${\bf v}_1^+={\bf w}_1^+$ and the contradiction follows from the sign-patterns of ${\bf w}_1$ and ${\bf z}_1$ which are $+-*$. So, ${\bf v}$ can be written as a strongly semiconformal sum.

Finally suppose that    $\alpha_r+\beta_1=0$, that is $\alpha_r=\beta_1=0$. Let $j_0\in\{s+1,\ldots,r\}$ be such that $\alpha_{j_0}>0$.  We may assume that $j_0=r-1$. Then
\[
{\bf v}= 
\begin{bmatrix}  (\alpha_1-1) (-{\bf u}_1) \cr \alpha_{2} (-{\bf u}_1) + \beta_{2} {\bf u}_2\cr \vdots\cr \beta_{r-1} (-{\bf u}_2) + (\alpha_{r-1}-1) {\bf u}_1 \cr  \beta_{r} (-{\bf u}_2) \end{bmatrix} +_{ssc} \begin{bmatrix}  -{\bf u}_1 \cr {\bf 0} \cr \vdots\cr {\bf u}_1 \cr {\bf 0} \end{bmatrix}.
\] 
\end{proof}

It follows from Lemma~\ref{uniquesemisum} that for an arbitrary ${\bf v}\in\ML(\MA^{(r)})$, whose row vectors are ${\bf v}_1,\ldots,{\bf v}_r$, we can associate in a unique way a vector  $(\alpha,\alpha',\beta,\beta',\gamma,\gamma')\in\NN^6$, which we denote  by $\lambda({\bf v})$, such that $\alpha$ is the coefficient of ${\bf u}_1$ in $\sum_{i=1}{\bf v}_i$, $\alpha'$ the coefficient of $-{\bf u}_1$, $\beta$ the coefficient of ${\bf u}_2$, $\beta'$ the coefficient of $-{\bf u}_2$ and so on. On the other hand since $\sum_{i=1}^r {\bf v}_i={\bf 0}$ we obtain that
\[
(\alpha-\alpha'){\bf u}_1+(\beta-\beta'){\bf u}_2+(\gamma-\gamma'){\bf u}_3=0.
\]
By  replacing in the equation above ${\bf u}_3=-{\bf u}_1-{\bf u}_2$ and by the linear independence of 
${\bf u}_1,{\bf u}_2$ we see that  $\alpha-\alpha'=\beta-\beta'=\gamma-\gamma'$.
\begin{Theorem}\label{markovnotci}
Let $\MA=\{n_1,n_2,n_3\}$ be such that $I_{\MA}$ is not a complete intersection. Then $m(\MA)$, the Markov complexity of $\MA$, is 3.  Moreover, for any $r\geq 3$ we have $\MM(\MA^{(r)})=\MS(\MA^{(r)})$ and the cardinality of $\MM(\MA^{(r)})$ is $k\binom{r}{2}+6\binom{r}{3}$, where $k$ is the cardinality of the Graver basis of $\MA$. 
\end{Theorem}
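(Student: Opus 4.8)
The plan is to prove the single sharper statement that $\MM(\MA^{(r)})=T$ for every $r\ge 3$, where $T$ is the set of Lemma~\ref{indispensablemon}; all three assertions follow from it. One inclusion is already in hand: Lemma~\ref{indispensablemon} gives $T\subseteq\MS(\MA^{(r)})$, and since $\MS(\MA^{(r)})\subseteq\MM(\MA^{(r)})$ always holds, we get $T\subseteq\MM(\MA^{(r)})$. The entire burden is therefore the reverse inclusion $\MM(\MA^{(r)})\subseteq T$: I must show that every ${\bf v}\in\ML(\MA^{(r)})\setminus T$ admits a proper strongly semiconformal decomposition, for then Proposition~\ref{universalmarkov} expels it from $\MM(\MA^{(r)})$. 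Combining the two inclusions forces $T=\MS(\MA^{(r)})=\MM(\MA^{(r)})$, which is assertion (b), and the count in Lemma~\ref{indispensablemon} then gives the cardinality (c).

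To manufacture the decomposition I would first use Remark~\ref{declaw} to assume ${\bf v}$ has full type $r$. If ${\bf v}$ has type $2$ but lies outside $T$, its two nonzero rows are $\pm{\bf u}$ with ${\bf u}\notin\MG(\MA)$, so ${\bf u}$ has a proper conformal decomposition which Lemma~\ref{c_sc_ssc}(i) upgrades to a strongly semiconformal one. The substance is the case $r\ge 3$. Here I would attach to ${\bf v}$ the vector $\lambda({\bf v})=(\alpha,\alpha',\beta,\beta',\gamma,\gamma')$, keep in mind the constraint $\alpha-\alpha'=\beta-\beta'=\gamma-\gamma'=:d$, and record which cones of the fan of Lemma~\ref{uniquesemisum} the rows occupy. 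When, after relabeling ${\bf u}_1,{\bf u}_2,{\bf u}_3$ (and permuting or negating as needed), every row falls into a single opposite pair of two-dimensional cones---equivalently when one coordinate pair of $\lambda({\bf v})$ vanishes---the vector has exactly the shape of Lemma~\ref{kindtwo}, which directly delivers a proper strongly semiconformal decomposition one of whose summands has type $2$.

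The configurations left are those in which no coordinate pair of $\lambda({\bf v})$ vanishes; this is forced whenever $d\neq 0$, since if $d>0$ then $\alpha,\beta,\gamma\ge d>0$ (and if $d<0$ then $\alpha',\beta',\gamma'>0$), so the rows genuinely use all three ray directions and no opposite pair of cones can contain them all. In such cases I would peel off a type-$2$ summand ${\bf z}=[\,\cdots{\bf t}\cdots(-{\bf t})\cdots]^T$ supported in two judiciously chosen rows, with ${\bf t}\in\ML(\MA)\setminus\{{\bf 0}\}$, arranged so that the sign-patterns (read off as in Remark~\ref{semiconf_sum}) make ${\bf v}={\bf z}+_{sc}({\bf v}-{\bf z})$ or ${\bf v}=({\bf v}-{\bf z})+_{sc}{\bf z}$ a proper semiconformal decomposition; since the type is at least $3$, Lemma~\ref{sc=ssc} then promotes it to a strongly semiconformal one. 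In the fully balanced subcases one instead exhibits a proper conformal decomposition and reapplies Lemma~\ref{c_sc_ssc}(i) (for instance, the type-$3$ vector with rows $a{\bf u}_1,a{\bf u}_2,a{\bf u}_3$, $a\ge 2$, is the conformal sum of the indispensable generator with rows ${\bf u}_1,{\bf u}_2,{\bf u}_3$ and its $(a-1)$-multiple).

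The main obstacle is precisely this last case analysis: one must verify that for every arrangement of the rows among the six cones not already subsumed by Lemma~\ref{kindtwo}, a valid type-$2$ peel or conformal split can always be located, which demands careful but elementary bookkeeping of the sign-patterns of the individual rows and of their partial sums. Once $\MM(\MA^{(r)})=T$ is established, the complexity statement is immediate: the type-$3$ members of $T$ (the permutations of ${\bf u}_1,{\bf u}_2,{\bf u}_3$) show $m(\MA)\ge 3$, while $T$ contains no vector of type exceeding $3$ and $\MM(\MA^{(2)})$ consists only of type-$2$ vectors, so $m(\MA)=3$.
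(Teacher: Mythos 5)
Your skeleton is the paper's: establish $T\subseteq\MS(\MA^{(r)})\subseteq\MM(\MA^{(r)})$ via Lemma~\ref{indispensablemon}, then show every nonzero ${\bf v}\in\ML(\MA^{(r)})\setminus T$ has a proper strongly semiconformal decomposition so that Proposition~\ref{universalmarkov} gives $\MM(\MA^{(r)})\subseteq T$; the invariant $\lambda({\bf v})$, the relation $\alpha-\alpha'=\beta-\beta'=\gamma-\gamma'$, and the reduction of the ``two-cone'' configurations to Lemma~\ref{kindtwo} are all exactly as in the paper. The problem is that the case you leave open is the heart of the proof, and the strategy you sketch for it is not the one that works. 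When $\alpha\beta\gamma\neq 0$ (or symmetrically $\alpha'\beta'\gamma'\neq 0$), the paper does \emph{not} peel off a type-$2$ summand: it chooses three rows whose canonical decompositions from Lemma~\ref{uniquesemisum} contain positive multiples of ${\bf u}_1$, ${\bf u}_2$, ${\bf u}_3$ respectively, sets ${\bf w}=[{\bf u}_1\ {\bf u}_2\ {\bf u}_3\ {\bf 0}\ \cdots\ {\bf 0}]^T$ --- a \emph{type-$3$} vector, itself an indispensable element of $T$ --- and checks via sign-patterns that ${\bf v}=({\bf v}-{\bf w})+_{ssc}{\bf w}$.

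Your proposed peel ${\bf z}=[\cdots\,{\bf t}\,\cdots\,(-{\bf t})\,\cdots]^T$ of type $2$ is not shown to exist, and there is real reason to doubt it in general. Take ${\bf v}$ with rows ${\bf u}_1-{\bf u}_2$, ${\bf u}_2-{\bf u}_3$, ${\bf u}_3-{\bf u}_1$, so $\lambda({\bf v})=(1,1,1,1,1,1)$. The natural candidates (for instance ${\bf t}=-{\bf u}_2$ placed in rows $1,2$, or ${\bf t}={\bf u}_1$ placed in rows $1,3$) force the two affected rows to be semiconformal in \emph{opposite} orders --- e.g.\ ${\bf u}_1-{\bf u}_2=(-{\bf u}_2)+_{sc}{\bf u}_1$ but ${\bf u}_2-{\bf u}_3=(-{\bf u}_3)+_{sc}{\bf u}_2$, not ${\bf u}_2+_{sc}(-{\bf u}_3)$ --- so neither ${\bf v}={\bf z}+_{sc}({\bf v}-{\bf z})$ nor ${\bf v}=({\bf v}-{\bf z})+_{sc}{\bf z}$ holds; the decomposition that actually works is ${\bf v}=[-{\bf u}_2\ -{\bf u}_3\ -{\bf u}_1]^T+_{ssc}[{\bf u}_1\ {\bf u}_2\ {\bf u}_3]^T$, in which \emph{both} summands have type $3$. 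Your fallback (a conformal split, as for $[a{\bf u}_1\ a{\bf u}_2\ a{\bf u}_3]^T$) covers only very special balanced vectors. You flag this case analysis yourself as ``the main obstacle,'' and it is precisely the missing idea: without the type-$3$ peel, or a complete proof that some peel always succeeds, the inclusion $\MM(\MA^{(r)})\subseteq T$ is not established. Everything downstream of that inclusion (uniqueness, the cardinality count, and $m(\MA)=3$) is handled correctly.
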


\begin{proof} We let $T$ be the subset of elements of $\MS(\MA^{(r)})$   described in Lemma~\ref{indispensablemon}. We will show that every nonzero vector ${\bf v}\in\ML(\MA^{(r)})\setminus T$ admits a proper strongly semiconformal decomposition. This will imply via Proposition~\ref{universalmarkov} that $\MM(\MA^{(r)})\subset T$ and thus $\MS(\MA^{(r)})=\MM(\MA^{(r)})=T$. Let ${\bf v}\in\ML(\MA^{(r)})\setminus T$ be a nonzero vector such that $\lambda({\bf v})=(\alpha,\alpha',\beta,\beta',\gamma,\gamma')$. We have two main cases: $\alpha\beta\gamma\neq 0$ and $\alpha\beta\gamma=0$. 

Assume first that $\alpha\beta\gamma\neq 0$. It follows from Lemma~\ref{uniquesemisum} that there exist three different rows of ${\bf v}$, and we may assume that are the first three, such that in their unique semiconformal decomposition we have a positive multiple of ${\bf u}_1,{\bf u}_2,{\bf u}_3$. Without loss of generality, we may assume that  for  $1\le i\le 3$, ${\bf u}_i$ appears in   row $i$, that is ${\bf v}_i=a_i(-{\bf u}_{j_i})+_{sc}b_i{\bf u}_i$ where $j_i\neq i$ and $a_i,b_i\in\NN$ with $b_i\neq 0$. We prove now that ${\bf v}={\bf z}+_{ssc}{\bf w}$, where ${\bf w}=({\bf u}_1\ {\bf u}_2\ {\bf u}_3\ {\bf 0}\ \ldots\ {\bf 0})^T$ and ${\bf z}={\bf v}-{\bf w}$. 
Note first that ${\bf z}$ is nonzero since ${\bf v}\not\in T$. For $1\le i\le 3$, the $i^{th}$ row of $\bf z$ is equal to $a_i(-{\bf u}_{j_i})+(b_i-1){\bf u}_i$,  while all other rows of $\bf z$ are equal to the corresponding rows of $\bf v$. By the sign-patterns it is easy to see that the first three rows of $\bf v$ have a semiconformal decomposition:
\[ a_i(-{\bf u}_{j_i})+b_i{\bf u}_i=(a_i(-{\bf u}_{j_i})+(b_i-1){\bf u}_i)+_{sc} {\bf u}_i,\ 1\le i\le 3\ .\]
When $i=1$,   the sign-pattern of ${\bf u}_1$ is $-++$ and thus $(a_1(-{\bf u}_{j_1})+b_1{\bf u}_1)^+>(a_1(-{\bf u}_{j_1})+(b_1-1){\bf u}_1)^+$. Therefore  ${\bf v}^+>{\bf z}^+$ and ${\bf v}={\bf z}+_{sc}{\bf w}$.  Finally, assume by contradiction that the decomposition is not strongly semiconformal. 
Then we necessarily have ${\bf v}^-={\bf w}^-$.  Since ${\bf w}_1(1)={\bf u}_1(1)<0$ this implies ${\bf z}_1(1)=0$. On the other hand ${\bf z}_1=a_1(-{\bf u}_{j_1})+_{sc}(b_1-1){\bf u}_1$ and since $j_1\in\{2,3\}$ then the sign-pattern of ${\bf z}_1$ is $-**$ if $a_1\neq 0$ or $b_1\neq 1$. Thus ${\bf z}_1(1)=0$ implies $a_1=0$ and $b_1=1$ and so ${\bf z}_1={\bf 0}$. Similarly we have that ${\bf z}_2={\bf z}_3={\bf 0}$. Since ${\bf w}_k={\bf 0}$ for all $k\geq 4$ and ${\bf v}^-={\bf w}^-$ it follows that ${\bf v}_k^-={\bf 0}$ for all $k\geq 4$. Therefore ${\bf v}_k={\bf v}_k^+$ for all $k\geq 4$ and using the fact that ${\bf v}_k\in\ML(\MA)$ we obtain ${\bf v}_k=0$ for all $k\geq 4$. This implies ${\bf z}_k={\bf 0}$ for all $k\geq 4$ and consequently ${\bf v}={\bf w}\in T$, a contradiction. Thus we have proved that ${\bf v}={\bf z}+_{ssc}{\bf w}$, as desired. We should note now that if $\alpha'\beta'\gamma'\neq 0$ then a similar argument shows that ${\bf v}={\bf w}'+_{ssc}{\bf z}'$, where ${\bf w}'=-{\bf w}$.       

In the second case we have $\alpha\beta\gamma=0$ and from the above remark we may also assume that $\alpha'\beta'\gamma'=0$. Without loss of generality we let $\alpha=0$. If $\alpha'=0$ then it follows from Lemma~\ref{uniquesemisum} that ${\bf v}$ is of the form described in Lemma~\ref{kindtwo} and we are done. Otherwise $\alpha'\neq 0$ and since $\alpha'\beta'\gamma'=0$ we have $\beta'=0$ or $\gamma'=0$. We analyze just the case $\beta'=0$, the other one being analogous. It follows from the definition of $\lambda({\bf v})$ that $\alpha'(-{\bf u}_1)+\beta {\bf u}_2 +\gamma {\bf u}_3 +\gamma'(-{\bf u}_3)={\bf 0}$ and using the relation $-{\bf u}_1={\bf u}_2+{\bf u}_3$ we get from the linear independence of ${\bf u}_2$ and ${\bf u}_3$ that $\alpha'+\beta=0$. Thus, since $\alpha',\beta\geq 0$ we have $\alpha'=\beta=0$, a contradiction. Therefore we obtain that $T=\MM(\MA^{(r)})$ and thus $m(\MA)=3$.   
\end{proof}
 
\begin{Remark}\label{bound_rem_not_com}{\rm In \cite{HS},  a  lower bound for $m(\MA)$ is given  by Ho\c sten and Sullivant for $\MA\subset\NN^m$: it equals the maximum $1$-norm of the elements in the Graver basis of $\MS(\MA)$, see \cite[Theorem 3.11]{HS}. By looking at the explicit description of the elements of $\MS(\MA)$ as given in Theorem~\ref{notcompint}, it is easy to see that the Graver basis of  $\MS(\MA)$ contains exactly one element: $(1,1,1)$. The $1$-norm of this element is $3$ and thus in this case the lower bound of \cite[Theorem 3.11]{HS} actually equals $m(\MA)$. 
}
\end{Remark}

\section{Markov complexity for monomial curves in $\AA^3$ which are complete intersections} 

In this section we study Lawrence liftings of  monomial curves in $\AA^3$ whose corresponding toric ideals are complete intersections. Suppose  that $\MA=\{ n_1, n_2,$ $ n_3\}$ $\subset\ZZ_{>0}$ is a monomial curve such that $I_{\MA}$ is  a complete intersection ideal. For $1\le i\le 3$ we let $c_i$ be the smallest element of $ \ZZ_{>0}$ such that $c_i n_i=r_{ij}n_j+ r_{ik}n_k$,   $r_{ij},r_{ik}\in  \NN$ with $\{i,j,k\}=\{1,2,3\}$.  In \cite[Proposition 3.4]{He},  it was shown that either $(0,-c_2,c_3)\in\MM(\MA)$ or $(c_1,0,-c_3)\in\MM(\MA)$ or $(-c_1,c_2,0)\in\MM(\MA)$. Below, we recall the description of the universal Markov basis of $\MA$ given  in \cite[Proposition 3.5]{He} when $(0,-c_2,c_3)\in\MM(\MA)$.

\begin{Proposition}\label{completeint}
 Let $\MA=\{n_1,n_2,n_3\}$ be a set of positive integers such that $\gcd(n_1,n_2,n_3)=1$,  $I_{\MA}$ is a complete intersection and  $(0,-c_2,c_3)\in\MM(\MA)$.  Let ${\bf u}_1=(-c_1,r_{12},r_{13})$ and ${\bf u}_2=(0,-c_2,c_3)$.  The universal Markov basis of $\MA$ is 
\[
\mathcal M(\MA)=\{{\bf u}_2,d\cdot {\bf u}_2+{\bf u}_1 :\  \ -\lfloor\frac{r_{13}}{c_3}\rfloor \leq d\leq\lfloor\frac{r_{12}}{c_2}\rfloor\},
\] 
\end{Proposition}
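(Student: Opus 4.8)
The plan is to apply Proposition~\ref{universalmarkov}, which reduces the statement to showing that the displayed set, call it $S$, coincides with the set of nonzero vectors of $\ML(\MA)$ admitting no proper strongly semiconformal decomposition. I will use the relations $c_2n_2=c_3n_3$ and $c_1n_1=r_{12}n_2+r_{13}n_3$, together with the following consequence of the minimality of the $c_i$: if $r_{13}=0$ then $c_1n_1=r_{12}n_2$ exhibits $r_{12}n_2\in\NN n_1+\NN n_3$, whence $c_2\le r_{12}$, and symmetrically $r_{12}=0$ forces $c_3\le r_{13}$. Since $\{{\bf u}_1,{\bf u}_2\}$ is a minimal Markov basis of $\MA$ (by the complete intersection hypothesis and \cite{He}), it generates $\ML(\MA)$ as a group, and because $\rank_\ZZ\ML(\MA)=2$ it is a $\ZZ$--basis; thus every vector of $\ML(\MA)$ is uniquely $a{\bf u}_1+b{\bf u}_2$ with $a,b\in\ZZ$.

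For the inclusion $S\subseteq\MM(\MA)$, the vector ${\bf u}_2$ lies in $\MM(\MA)$ by hypothesis, hence has no proper strongly semiconformal decomposition by Proposition~\ref{universalmarkov}. For a vector ${\bf w}=d{\bf u}_2+{\bf u}_1=(-c_1,\,r_{12}-dc_2,\,r_{13}+dc_3)$ with $d$ in the stated range, the defining inequalities on $d$ give ${\bf w}$ a negative first coordinate and nonnegative second and third coordinates, so ${\bf w}^+=(0,r_{12}-dc_2,r_{13}+dc_3)$ and ${\bf w}^-=(c_1,0,0)$. Writing a hypothetical proper decomposition as ${\bf w}={\bf w}_1+_{ssc}{\bf w}_2$ with ${\bf w}_j=a_j{\bf u}_1+b_j{\bf u}_2$ and $a_1+a_2=1$, I read off from the first coordinates $-a_jc_1$ that $a_1\ge 0$; in each of the cases $a_1=0$, $a_1=1$, $a_1\ge 2$ the first--coordinate bookkeeping, combined with the requirement ${\bf w}^->{\bf w}_2^-$ and with $\ML(\MA)\cap\NN^n=\{{\bf 0}\}$, produces a contradiction. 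Hence ${\bf w}$ has no proper strongly semiconformal decomposition and ${\bf w}\in\MM(\MA)$.

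For the reverse inclusion $\MM(\MA)\subseteq S$, I take a nonzero ${\bf v}=a{\bf u}_1+b{\bf u}_2\notin S$ and exhibit a proper strongly semiconformal decomposition; since ${\bf v}$ has one iff $-{\bf v}$ does, I may assume $a\ge 0$. If $a=0$ then $|b|\ge 2$ and $b{\bf u}_2$ decomposes conformally as ${\bf u}_2+(b-1){\bf u}_2$ (up to sign), hence strongly semiconformally by Lemma~\ref{c_sc_ssc}(i). If $a=1$ and $b$ lies above the range I peel off ${\bf u}_2$, writing ${\bf v}={\bf u}_2+_{ssc}({\bf u}_1+(b-1){\bf u}_2)$, and if $b$ lies below the range I peel off $-{\bf u}_2$ symmetrically; the sign patterns make these semiconformal, and the strict inequalities hold because $b>\lfloor r_{12}/c_2\rfloor$, together with the implication $r_{13}=0\Rightarrow c_2\le r_{12}$, forces $b\ge 2$. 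If $a\ge 2$, I peel off an in--range generator ${\bf w}_1={\bf u}_1+d{\bf u}_2$, so that ${\bf w}_2={\bf v}-{\bf w}_1=(a-1){\bf u}_1+(b-d){\bf u}_2\neq{\bf 0}$; since ${\bf w}_1$ has nonnegative second and third coordinates while ${\bf v}^-(1)=ac_1>c_1={\bf w}_1^-(1)$, the sum ${\bf v}={\bf w}_2+_{sc}{\bf w}_1$ is semiconformal with ${\bf v}^->{\bf w}_1^-$ for every in--range $d$, and I then select $d$ so that ${\bf w}_1(i)>0$ in a coordinate $i\in\{2,3\}$ for which ${\bf v}(i)>0$ (possible by the minimality facts of the first paragraph), which upgrades the decomposition to strongly semiconformal.

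The main obstacle is precisely the passage from semiconformality to \emph{strong} semiconformality in these last two cases: splitting ${\bf v}$ along the fan of ${\bf u}_1,{\bf u}_2$ always yields a semiconformal sum, but the strict inequalities ${\bf v}^+>{\bf w}_2^+$ and ${\bf v}^->{\bf w}_1^-$ can degenerate on boundary sign patterns. The key point is that the hypothesis $(0,-c_2,c_3)\in\MM(\MA)$, through the minimality of $c_2$ and $c_3$, excludes exactly the offending configurations (such as $r_{13}=0$ with $r_{12}<c_2$) in which strictness would be lost; once these are ruled out, choosing $d$ at the appropriate end of the range always yields a genuine proper strongly semiconformal decomposition, completing the identification $\MM(\MA)=S$.
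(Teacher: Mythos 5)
The paper does not prove this proposition at all: it is quoted verbatim from Herzog \cite[Proposition 3.5]{He}, so there is no internal argument to compare yours against. Your idea of deriving it instead from Proposition~\ref{universalmarkov} is legitimate, and your reverse inclusion $\MM(\MA)\subseteq S$ is essentially sound: the case split on $a=0$, $a=1$, $a\ge 2$ works, the minimality facts ($r_{13}=0\Rightarrow c_2\le r_{12}$, etc.) are correct and are exactly what is needed to keep the chosen $d$ in range, and the only blemish is the claim that $b>\lfloor r_{12}/c_2\rfloor$ ``forces $b\ge 2$'' --- it does not ($b=1$ occurs whenever $\lfloor r_{12}/c_2\rfloor=0$); what you actually need, and what is true, is that either $b\ge 2$ or $r_{13}>0$, either of which gives $r_{13}+(b-1)c_3>0$ and hence the strict inequality ${\bf v}^+>{\bf u}_2^+$.

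The genuine gap is in the inclusion $S\subseteq\MM(\MA)$. Proposition~\ref{universalmarkov} characterizes $\MM(\MA)$ by the absence of proper strongly semiconformal decompositions into $l$ summands for \emph{every} $l\ge 2$, and the paper's Example~\ref{2_not_enough} exists precisely to show that ruling out $l=2$ is not enough in general. You write a hypothetical decomposition only as ${\bf w}={\bf w}_1+_{ssc}{\bf w}_2$ and never address $l\ge 3$, so as written the forward inclusion is not established. The gap is fixable with the fiber you have already implicitly computed: for ${\bf w}={\bf u}_1+d{\bf u}_2$ with $d$ in range, minimality of $c_1$ forces $\mathcal F_{\bf w}=\{(c_1,0,0)\}\cup\{(0,r_{12}-ec_2,r_{13}+ec_3)\}$, so every fiber element other than ${\bf w}^-=(c_1,0,0)$ is supported on $\{2,3\}$. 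A proper strongly semiconformal decomposition ${\bf w}=_{ssc}{\bf u}_1'+\cdots+{\bf u}_l'$ produces the chain ${\bf t}_i={\bf w}^+-\sum_{j\le i}{\bf u}_j'$ in $\mathcal F_{\bf w}$ with ${\bf t}_l={\bf w}^-$, and the $i=l$ condition reads ${\bf t}_{l-1}>({\bf t}_{l-1}-{\bf w}^-)^+$; since ${\bf t}_{l-1}\neq{\bf w}^-$ has support in $\{2,3\}$, disjoint from that of ${\bf w}^-$, one gets $({\bf t}_{l-1}-{\bf w}^-)^+={\bf t}_{l-1}$ and the strict inequality fails for every $l\ge 2$. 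Adding this (or replacing the forward inclusion by the direct observation that $\{{\bf u}_2,\,{\bf u}_1+d{\bf u}_2\}$ is a minimal Markov basis for each in-range $d$) would close the argument.
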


For the rest of this section we will  assume that $(0,-c_2,c_3)\in\MM(\MA)$,  the other two cases being similar. Let us note   that the sign patterns of   ${\bf u}_1$ and ${\bf u}_2$ are $-++$ and $0-+$. 

\begin{Lemma}\label{sccompint}
Let $\MA=\{n_1,n_2,n_3\}$ be such that $I_{\MA}$ is a complete intersection and
$(0,-c_2,c_3)\in\MM(\MA)$. If ${\bf 0}\neq {\bf v}\in\ML(\MA)$ then there are unique $\alpha,\beta\in\NN$ such that
  ${\bf v}=\alpha (-{\bf u}_1) +_{sc} \beta (\pm {\bf u}_2)$ or ${\bf v}=\beta (\pm {\bf u}_2) +_{sc} \alpha{\bf u}_1$.
\end{Lemma}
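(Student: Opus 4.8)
The plan is to first reduce the statement to a question about the integer coordinates of ${\bf v}$ in the basis ${\bf u}_1,{\bf u}_2$. Since $I_{\MA}$ is a height-two complete intersection, $\ML(\MA)=\Ker_{\ZZ}[n_1\ n_2\ n_3]$ has rank $2$, and ${\bf u}_1=(-c_1,r_{12},r_{13})$, ${\bf u}_2=(0,-c_2,c_3)$ are linearly independent. I would argue that they in fact form a $\ZZ$-basis of $\ML(\MA)$: any Markov basis generates $\ML(\MA)$ as a group (connecting ${\bf w}^+$ to ${\bf w}^-$ by Markov moves writes an arbitrary ${\bf w}\in\ML(\MA)$ as an integer combination of its elements), while by Proposition~\ref{completeint} every element of $\MM(\MA)$ lies in $\ZZ{\bf u}_1+\ZZ{\bf u}_2$; combined with linear independence this forces $\ML(\MA)=\ZZ{\bf u}_1\oplus\ZZ{\bf u}_2$. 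Hence every nonzero ${\bf v}\in\ML(\MA)$ is uniquely ${\bf v}=a{\bf u}_1+b{\bf u}_2$ with $a,b\in\ZZ$, and I would set $\alpha=|a|$, $\beta=|b|$.

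The assignment of a decomposition to ${\bf v}$ is then the planar fan argument of Lemma~\ref{uniquesemisum}, specialized to the two rays $\RR_+{\bf u}_1$, $\RR_+{\bf u}_2$ and their opposites, which cut $\span_{\RR}({\bf u}_1,{\bf u}_2)$ into four two-dimensional cones and four rays. The sign of $a$ dictates the ordering asserted in the statement ($a\le 0$ gives ${\bf v}=\alpha(-{\bf u}_1)+_{sc}\beta(\pm{\bf u}_2)$, while $a\ge 0$ gives ${\bf v}=\beta(\pm{\bf u}_2)+_{sc}\alpha{\bf u}_1$), the sign of $b$ fixes the sign in $\pm{\bf u}_2$, and a ray corresponds to $\alpha=0$ or $\beta=0$, where one summand is ${\bf 0}$ and the sum is trivially semiconformal. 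Because this assignment is forced by the signs of $a$ and $b$, uniqueness of $(\alpha,\beta)$ is immediate from uniqueness of the basis expansion.

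What remains is to check that each of the four ordered sums really is semiconformal, for which I would use the sign patterns recorded before the lemma, namely ${\bf u}_1$ is $-++$, $-{\bf u}_1$ is $+--$, ${\bf u}_2$ is $0-+$, $-{\bf u}_2$ is $0+-$, together with the criterion that ${\bf u}={\bf x}+_{sc}{\bf y}$ iff ${\bf u}^+\ge{\bf x}^+$ and ${\bf u}^-\ge{\bf y}^-$. In the cone $a<0,b>0$, with ${\bf x}=\alpha(-{\bf u}_1)$ and ${\bf y}=\beta{\bf u}_2$, the only coordinate in the support of ${\bf x}^+$ is the first, where ${\bf v}$ agrees with ${\bf x}$ and is positive, and the only coordinate in the support of ${\bf y}^-$ is the second, where ${\bf v}^-$ dominates ${\bf y}^-$; both inequalities hold irrespective of the ambiguous third coordinate. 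The cone $a>0,b>0$, giving $\beta{\bf u}_2+_{sc}\alpha{\bf u}_1$, is handled identically using the third and first coordinates. The remaining two cones follow at no extra cost from the order-reversing equivalence ${\bf u}={\bf x}+_{sc}{\bf y}\Leftrightarrow-{\bf u}=(-{\bf y})+_{sc}(-{\bf x})$ from Section~1, which sends the decomposition of ${\bf v}$ to the asserted decomposition of $-{\bf v}$ in the opposite cone.

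I expect the only delicate point to be the bookkeeping of orientation: one must verify that the summand carrying the surviving positive coordinate is placed on the left and the one carrying the surviving negative coordinate on the right, exactly as the two displayed orderings require, and that the degenerate cases along the rays are subsumed rather than treated separately. The one genuinely new ingredient relative to Lemma~\ref{uniquesemisum} is the $\ZZ$-basis property from the first paragraph; once ${\bf v}$ is written in coordinates, the argument is the same two-dimensional geometry with two generators in place of three.
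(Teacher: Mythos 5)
Your proposal is correct and follows essentially the same route as the paper, whose entire proof is the observation that $\rank_{\ZZ}\ML(\MA)=2$ gives ${\bf v}=a{\bf u}_1+b{\bf u}_2$ with $a,b\in\ZZ$, after which ``it is easy to see how to write ${\bf v}$ in the desired form.'' You simply supply the details the paper leaves implicit --- the justification that ${\bf u}_1,{\bf u}_2$ form a $\ZZ$-basis (the paper asserts integrality of $a,b$ without comment) and the quadrant-by-quadrant sign-pattern check of semiconformality --- and both are carried out correctly.
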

\begin{proof}
Since $\rank_{\ZZ}(\ML({\MA}))=2$, if ${\bf v}\neq {\bf 0}$ then there are $a,b\in \ZZ$ such that 
 ${\bf v}=a{\bf u}_1+b{\bf u}_2$ for some integers $a,b$. It is easy to see how to write $\bf v$ in the desired 
form.
\end{proof}

We note that the semiconformal decomposition of Lemma~\ref{sccompint} might not be strongly semiconformal as is the case for  $(c_3+1)(-{\bf u}_1)+_{sc}{\bf u}_2$. Similarly, one can show that any of the semiconformal sums from Lemma~\ref{sccompint} with $\alpha,\beta\in\ZZ_{>0}$ may not be in general strongly semiconformal. In the next lemma we identify certain elements which are not part of the universal Markov basis of $\MA^{(r)}$. 

\begin{Lemma}\label{kindtwoci}
Let $\MA=\{n_1,n_2,n_3\}$ be such that $I_{\MA}$ is a complete intersection and  $(0,-c_2,c_3)\in\MM(\MA)$. Let  $r\geq 3$. Suppose that  ${\bf 0}\neq {\bf v}\in\ML(\MA^{(r)})$ is of the following form
\[{\bf v}=
\begin{bmatrix}  \alpha_1 (-{\bf u}_1) +\beta_1 {\bf u}_2\cr \vdots \cr \alpha_s (-{\bf u}_1) +\beta_s {\bf u}_2\cr \beta_{s+1} (-{\bf u}_2) +\alpha_{s+1} {\bf u}_1\cr \vdots \cr \beta_{r} (-{\bf u}_2) +\alpha_{r} {\bf u}_1 \end{bmatrix},
\]
where $1\leq s\leq r-1$ and $\alpha_i,\beta_i\in\ZZ_{>0}$  for $1\leq i\leq r$. Then ${\bf v}$  admits a proper strongly semiconformal decomposition into two vectors, where one of the summands is a vector of type two.   
\end{Lemma}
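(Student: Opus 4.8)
The plan is to mimic the structure of the proof of Lemma~\ref{kindtwo} but with the bookkeeping simplified by the fact that here all $\alpha_i,\beta_i$ are strictly positive. The key structural input is Lemma~\ref{sc=ssc}: if I can split ${\bf v}={\bf z}+_{sc}{\bf w}$ (or in the reverse order) with ${\bf z}$ of type two and the whole vector of type $\geq 3$, then the decomposition is automatically strongly semiconformal. So the entire burden is to peel off a type-two summand ${\bf z}$ whose first and last nonzero rows are of the sign-pattern shape $+-*$ matching the corresponding rows of the complementary summand ${\bf w}$, so that the sum is forced to be semiconformal. First I would invoke Remark~\ref{declaw} to assume the type of ${\bf v}$ is exactly $r$, and record the lattice relations coming from $\sum_i {\bf v}_i={\bf 0}$: using the linear independence of ${\bf u}_1,{\bf u}_2$ one gets $\sum_{i=1}^s\alpha_i=\sum_{j=s+1}^r\alpha_j$ and $\sum_{i=1}^s\beta_i=\sum_{j=s+1}^r\beta_j$.

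Next I would select indices $i\in\{1,\dots,s\}$ and $j\in\{s+1,\dots,r\}$ to form the type-two summand living in rows $i,j$. The natural choice is to pick a row from the ``top block'' (sign-pattern of $\alpha_i(-{\bf u}_1)+\beta_i{\bf u}_2$, which with ${\bf u}_1$ of pattern $-++$ and ${\bf u}_2$ of pattern $0-+$ is of the shape $+-*$) and a row from the ``bottom block''. Relabeling, assume $i=1$, $j=r$. I would set
\[
{\bf z}=\begin{bmatrix} \alpha_1(-{\bf u}_1)+\beta_1{\bf u}_2\cr {\bf 0}\cr \vdots\cr {\bf 0}\cr \beta_r(-{\bf u}_2)+\alpha_r{\bf u}_1\end{bmatrix},\qquad
{\bf w}={\bf v}-{\bf z}.
\]
The point is that ${\bf z}\in\ML(\MA^{(r)})$ because its two nonzero rows are themselves in $\ML(\MA)$ and sum to zero only after I also adjust; in fact a type-two element of $\ML(\MA^{(r)})$ needs its two nonzero rows to be negatives of each other, so ${\bf z}$ as written is generally \emph{not} in the lattice unless $\alpha_1(-{\bf u}_1)+\beta_1{\bf u}_2=-(\beta_r(-{\bf u}_2)+\alpha_r{\bf u}_1)$. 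The honest construction is therefore to take the type-two summand to be $[{\bf w}_0\ {\bf 0}\ \cdots\ {\bf 0}\ -{\bf w}_0]^T$ where ${\bf w}_0$ is the common ``overlap'' between the positive part in row $1$ and the negative part in row $r$; concretely I would compare coordinates and extract the largest piece that can be shifted from row $1$ to row $r$ while keeping both residual rows in the correct cone. This is exactly the coordinate-wise minimum construction, and the sign-patterns $+-*$ guarantee that after extraction the remaining rows stay of the form (multiple of $-{\bf u}_1$)$+$(multiple of ${\bf u}_2$) or the reversed, so ${\bf w}$ remains of the stated form and ${\bf z}$ is a genuine lattice element of type two.

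Having produced the splitting, I would verify semiconformality directly from the sign-patterns: since row $1$ of ${\bf z}$ and the modified row $1$ of ${\bf w}$ are both of pattern $+-*$ in the first two coordinates, and the only rows where both summands are simultaneously nonzero are rows $1$ and $r$, the overlap conditions for $+_{sc}$ (or $+_{sc}$ in the reversed order, depending on whether the transferred mass makes ${\bf v}^+={\bf z}^++{\bf w}^+$ split cleanly) hold coordinatewise. Then Lemma~\ref{sc=ssc}, applicable because the type of ${\bf v}$ is at least $3$, upgrades ${\bf v}={\bf z}+_{sc}{\bf w}$ (or ${\bf v}={\bf w}+_{sc}{\bf z}$) to the desired strongly semiconformal decomposition. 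The main obstacle I anticipate is precisely the one I flagged: ensuring the extracted type-two summand ${\bf z}$ is honestly in $\ML(\MA^{(r)})$ (its two nonzero rows must be negatives of one another) while simultaneously keeping the two residual rows of ${\bf w}$ inside the cones $\RR_+\{-{\bf u}_1,{\bf u}_2\}$ and $\RR_+\{-{\bf u}_2,{\bf u}_1\}$ so that ${\bf w}$ retains the stated form; this forces a careful case split on which of $\alpha_1$ versus $\alpha_r$ and $\beta_1$ versus $\beta_r$ is larger, mirroring the case analysis in Lemma~\ref{kindtwo}, though here the strict positivity of all coefficients eliminates the degenerate $\alpha_r=\beta_1=0$ case that required the separate $\pm{\bf u}_1$ argument there.
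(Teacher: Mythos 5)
Your overall strategy is the paper's: reduce to type exactly $r$, record the relations $\sum_{i\le s}\alpha_i=\sum_{j>s}\alpha_j$ and $\sum_{i\le s}\beta_i=\sum_{j>s}\beta_j$, pick one row from each block (say rows $1$ and $r$), and peel off the type-two lattice element whose first row is $\min(\alpha_1,\alpha_r)(-{\bf u}_1)+\min(\beta_1,\beta_r){\bf u}_2$ and whose $r$-th row is its negative. You also correctly flag the one real subtlety, namely that the two nonzero rows of the type-two summand must be negatives of each other, which forces the case split on $\alpha_1$ vs.\ $\alpha_r$ and $\beta_1$ vs.\ $\beta_r$; that is exactly the case analysis the paper carries out.

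There is, however, one concrete gap: your final upgrade step ``then Lemma~\ref{sc=ssc} \ldots\ upgrades ${\bf v}={\bf z}+_{sc}{\bf w}$ to a strongly semiconformal decomposition'' does not cover all cases. Lemma~\ref{sc=ssc} is stated (and proved) only for splittings in which the type-two summand absorbs an \emph{entire} row of ${\bf v}$, i.e.\ ${\bf w}$ vanishes in one of the two rows where ${\bf z}$ is supported; its proof uses ${\bf w}_1={\bf 0}$ to rule out ${\bf v}^-={\bf w}^-$. This hypothesis holds when the minima are both attained in the same row (the cases $\alpha_1\le\alpha_r,\ \beta_1\le\beta_r$ and $\alpha_1\ge\alpha_r,\ \beta_1\ge\beta_r$), but it fails in the mixed case, e.g.\ $\alpha_1<\alpha_r$ and $\beta_1>\beta_r$: there the residual rows of ${\bf w}$ are $(\beta_1-\beta_r){\bf u}_2$ and $(\alpha_r-\alpha_1){\bf u}_1$, both nonzero, so Lemma~\ref{sc=ssc} simply does not apply. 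The paper handles this case by a short direct verification: ${\bf v}^-\neq{\bf w}^-$ follows from comparing the sign-patterns of ${\bf z}_r$ (which is $-+*$) and ${\bf w}_r=(\alpha_r-\alpha_1){\bf u}_1$ in the $r$-th row, and ${\bf v}^+\neq{\bf z}^+$ follows because ${\bf z}_2=\cdots={\bf z}_{r-1}={\bf 0}$ while the type of ${\bf v}$ is $r\ge 3$. You need to supply this (easy but non-automatic) argument; as written, your proof would be incomplete precisely in the case that your own hypothesis --- all coefficients strictly positive --- does nothing to exclude.
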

\begin{proof}
Note first that our hypotheses imply that the type of ${\bf v}$ is $r$. Since ${\bf v}\in\ML(\MA^{(r)})$, the sum of the rows of ${\bf v}$ is zero. Since  ${\bf u}_1,{\bf u}_2$ are linearly independent we have 
\begin{equation}\label{need}
\sum_{i=1}^s\alpha_i=\sum_{j=s+1}^r\alpha_j \ \text{ and } \ \sum_{i=1}^s \beta_i=\sum_{j=s+1}^r \beta_j.
\end{equation}
We will analyze two cases. For the first case we assume that there exist integers $i,j$ corresponding to nonzero rows of ${\bf v}$ with $1\leq i\leq s$ and $s+1\leq j\leq r$ such that either a) $\alpha_i\leq \alpha_j$ and $\beta_i\leq \beta_j$ or b) $\alpha_i\geq \alpha_j$ and $\beta_i\geq\beta_j$. Without loss of generality we may assume that $i=1$, $j=r$. Let us first suppose that   $\alpha_1\leq\alpha_r$ while $\beta_1\leq\beta_r$. Then ${\bf v}={\bf z}+{\bf w}$, where
\[
{\bf z} = \begin{bmatrix}  \alpha_1 (-{\bf u}_1) +\beta_1 {\bf u}_2\cr {\bf 0}\cr \vdots \cr {\bf 0} \cr \beta_{1} (-{\bf u}_2) +\alpha_{1} {\bf u}_1 \end{bmatrix} ,\ {\bf w}= \begin{bmatrix} {\bf 0} \cr \alpha_2 (-{\bf u}_1) +\beta_2 {\bf u}_2\cr \vdots \cr \beta_{r-1} (-{\bf u}_2) +\alpha_{r-1} {\bf u}_1 \cr (\beta_{r}-\beta_1) (-{\bf u}_2) +(\alpha_r-\alpha_{1}) {\bf u}_1 \end{bmatrix} \ .
\]
We will show that either ${\bf v}={\bf z}+_{ssc}{\bf w} $ or ${\bf v}={\bf w}+_{ssc}{\bf z} $. Indeed, note first that by the assumptions ${\bf z}$ is a type two vector and ${\bf w}$ is nonzero since ${\bf v}$ is of type $r\geq 3$. Moreover, since the sign-pattern of $\beta_1 (-{\bf u}_2) +\alpha_1 {\bf u}_1$ is $-+*$, for the $r^{th}$ row of ${\bf v}$ we have either ${\bf v}_r={\bf z}_r+_{sc}{\bf w}_r$  or ${\bf v}_r={\bf w}_r+_{sc}{\bf z}_r$ . Indeed, since $\beta_r-\beta_1\geq 0$ and $\alpha_r-\alpha_1\geq 0$   there are four possible patterns for the vector ${\bf w}_r$ corresponding to the cases: $\alpha_1=\alpha_r$ and $\beta_1=\beta_r$, $\alpha_1=\alpha_r$ and $\beta_1<\beta_r$, $\alpha_1<\alpha_r$ and $\beta_1=\beta_r$,    $\alpha_1<\alpha_r$ and $\beta_1<\beta_r$. More precisely, the corresponding four sign-patterns of ${\bf w}_r$ are: $000$, $0+-$, $-++$ and $-+*$ and one can notice that ${\bf z}_r$ and ${\bf w}_r$  add up semiconformally in this order or the reversed. This implies that ${\bf v}$ is the semiconformal sum of ${\bf z}$ and ${\bf w}$ in this order or the reversed one. Applying now Lemma~\ref{sc=ssc} we obtain that ${\bf v}$ is the strongly semiconformal sum of ${\bf z}$ and ${\bf w}$. Next let us suppose that  $\alpha_1\geq\alpha_r$ while $\beta_1\geq\beta_r$. Thus ${\bf v}={\bf z}+{\bf w}$, where
\[
{\bf z} = \begin{bmatrix}  \alpha_r (-{\bf u}_1) +\beta_r {\bf u}_2\cr {\bf 0}\cr \vdots \cr {\bf 0} \cr \beta_{r} (-{\bf u}_2) +\alpha_{r} {\bf u}_1 \end{bmatrix} ,\ {\bf w}= \begin{bmatrix} (\alpha_1-\alpha_{r})(-{\bf u}_1) + (\beta_{1}-\beta_r) {\bf u}_2 \cr \alpha_2 (-{\bf u}_1) +\beta_2 {\bf u}_2\cr \vdots \cr \beta_{r-1} (-{\bf u}_2) +\alpha_{r-1} {\bf u}_1 \cr {\bf 0} \end{bmatrix}\ .
\]
Since $\alpha_r\beta_r>0$, it follows that  the sign-pattern of ${\bf z}_1$ is $+*-$. The four possible sign-patterns of ${\bf w}_1$ are $000$, $+--$, $0+-$ and $+*-$. As before one   can conclude that either ${\bf v}={\bf z}+_{ssc}{\bf w} $ or ${\bf v}={\bf w}+_{ssc}{\bf z} $.

For the second case  the following holds for all integers $i,j$ corresponding to nonzero rows of ${\bf v}$ with $1\leq i\leq s$ and $s+1\leq j\leq r$:  either $\alpha_i<\alpha_j$ and $\beta_i>\beta_j$ or $\alpha_i>\alpha_j$ and $\beta_i<\beta_j$. Consider $i=1$ and $j=r$. Let us first suppose that   $\alpha_1<\alpha_r$ and $\beta_1>\beta_r$. We will show that  ${\bf v}={\bf z}+_{ssc}{\bf w}$, where
\[
{\bf z} = \begin{bmatrix}  \alpha_1 (-{\bf u}_1) +\beta_r {\bf u}_2\cr {\bf 0}\cr \vdots \cr {\bf 0} \cr \beta_{r} (-{\bf u}_2) +\alpha_{1} {\bf u}_1 \end{bmatrix},\ {\bf w}= \begin{bmatrix} (\beta_{1}-\beta_r) {\bf u}_2 \cr \alpha_2 (-{\bf u}_1) +\beta_2 {\bf u}_2\cr \vdots \cr \beta_{r-1} (-{\bf u}_2) +\alpha_{r-1} {\bf u}_1 \cr (\alpha_r-\alpha_1){\bf u}_1 \end{bmatrix}.
\]
Indeed, we notice first that ${\bf v}={\bf z}+_{sc}{\bf w}$, since the sign-patterns of ${\bf z}_1$ and ${\bf z}_r$ are  $+-*$ and $-+*$, while the sign patterns of ${\bf w}_1$ and ${\bf w}_r$ are $0-+$ and $-++$. Moreover the sign patterns of ${\bf z}_r$ and ${\bf w}_r$ show also that ${\bf v}_r^-\neq{\bf w}_r^-$ and implicitly ${\bf v}^-\neq{\bf w}^-$. In addition, ${\bf z}_2=\cdots={\bf z}_{r-1}={\bf 0}$ imply that ${\bf v}^+\neq{\bf z}^+$, otherwise we would have ${\bf v}_2=\cdots={\bf v}_{r-1}={\bf 0}$ and thus the type of ${\bf v}$ is two, a contradiction. Therefore we also have ${\bf v}^+\neq{\bf z}^+$ and ${\bf v}^-\neq{\bf w}^-$ and thus ${\bf v}={\bf z}+_{ssc}{\bf w}$, as desired. Next we consider the case where $\alpha_1>\alpha_r$ and $\beta_1<\beta_r$. Then ${\bf v}={\bf w}+_{sc}{\bf z}$, where
\[
{\bf w} = \begin{bmatrix} (\alpha_1-\alpha_r)(-{\bf u}_1) \cr \alpha_2 (-{\bf u}_1) +\beta_2 {\bf u}_2\cr \vdots \cr \beta_{r-1} (-{\bf u}_2) +\alpha_{r-1} {\bf u}_1 \cr (\beta_{r}-\beta_1)(-{\bf u}_2) \end{bmatrix} ,\ {\bf z}= \begin{bmatrix}  \alpha_r (-{\bf u}_1) +\beta_1 {\bf u}_2\cr {\bf 0}\cr \vdots \cr {\bf 0} \cr \beta_{1} (-{\bf u}_2) +\alpha_{r} {\bf u}_1 \end{bmatrix},
\] 
since the the sign-patterns of ${\bf w}_1$ and ${\bf w}_r$ are  $+--$ and $0+-$, while the sign patterns of ${\bf z}_1$ and ${\bf z}_r$ are $+-*$ and $-+*$. An  argument similar to the previous one shows that the sum is also strongly semiconformal, that is ${\bf v}={\bf w}+_{ssc}{\bf z}$.    
\end{proof}

We are ready to prove the main result of this section.  

\begin{Theorem}\label{markcompci}
Let $\MA=\{n_1,n_2,n_3\}$ be such that $I_{\MA}$ is a complete intersection. Then  $m(\MA)$,  the Markov complexity of
$\MA$, is  $2$. Moreover, for any $r\geq 2$ we have $\MM(\MA^{(r)})=\MS(\MA^{(r)})$ and the cardinality of $\MM(\MA^{(r)})$ is $k\binom{r}{2}$, where $k$ is the cardinality of the Graver basis of $\MA$.
\end{Theorem}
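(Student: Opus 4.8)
The plan is to mirror the structure of the proof of Theorem~\ref{markovnotci} from Section 2, adapting it to the complete intersection case. The goal is to identify a set $T\subset\MS(\MA^{(r)})$ of indispensable elements, show that every nonzero vector outside $T$ admits a proper strongly semiconformal decomposition (hence is not in $\MM(\MA^{(r)})$ by Proposition~\ref{universalmarkov}), and thereby conclude that $T=\MM(\MA^{(r)})=\MS(\MA^{(r)})$. Since $\ML(\MA)$ now has rank $2$ but is generated differently---by ${\bf u}_1$ with sign-pattern $-++$ and ${\bf u}_2$ with sign-pattern $0-+$---the decomposition of an arbitrary row is governed by Lemma~\ref{sccompint} rather than the symmetric three-cone picture of Lemma~\ref{uniquesemisum}. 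This explains why the answer drops from $3$ to $2$: there is no analogue of the indispensable type-$3$ vector built from a permutation of ${\bf u}_1,{\bf u}_2,{\bf u}_3$, so the only indispensable elements of type $\geq 2$ should be the type-$2$ vectors whose nonzero rows are of the form ${\bf u},-{\bf u}$ with ${\bf u}\in\MG(\MA)$.

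\emph{First} I would define $T$ to consist of all type-$2$ vectors whose two nonzero rows are ${\bf u},-{\bf u}$ for some ${\bf u}\in\MG(\MA)$, and verify $T\subset\MS(\MA^{(r)})$ by exactly the argument used in the first half of Lemma~\ref{indispensablemon}: a proper semiconformal decomposition of such a vector forces a conformal decomposition of ${\bf u}$, contradicting ${\bf u}\in\MG(\MA)$. This immediately yields the cardinality $k\binom{r}{2}$, since there are no type-$3$ indispensable elements to contribute the $6\binom{r}{3}$ term. \emph{Next}, for a nonzero ${\bf v}\in\ML(\MA^{(r)})\setminus T$, I would reduce to type exactly $r\geq 2$ via Remark~\ref{declaw}. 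If the type is $2$, then its rows lie outside $\MG(\MA)$, so ${\bf v}$ has a proper conformal decomposition and hence a proper strongly semiconformal one by Lemma~\ref{c_sc_ssc}(i). For type $r\geq 3$, I would use Lemma~\ref{sccompint} to write each row uniquely in terms of $\pm{\bf u}_1,\pm{\bf u}_2$, and then split into the case where some row is a (signed) multiple of ${\bf u}_1$ or ${\bf u}_2$ alone versus the generic case where every row genuinely mixes both generators; the latter is precisely the configuration handled by Lemma~\ref{kindtwoci}.

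\emph{The main obstacle} will be the bookkeeping needed to show that, outside the clean family covered by Lemma~\ref{kindtwoci}, every remaining vector still splits off a type-$2$ summand semiconformally. The difficulty is that, unlike the not-complete-intersection case, the two generators ${\bf u}_1$ and ${\bf u}_2$ play asymmetric roles (their sign-patterns are $-++$ and $0-+$, with a genuine $0$ in the first coordinate of ${\bf u}_2$), so the sign-pattern case analysis for pairing rows is less uniform. I expect to handle this by isolating, as in the $s=1$ argument of Lemma~\ref{kindtwo}, a single row that can be peeled off together with a matching compensating row to form a type-$2$ vector ${\bf z}$ with ${\bf v}={\bf z}+_{sc}{\bf w}$ (or the reverse order), then invoking Lemma~\ref{sc=ssc} to upgrade semiconformality to strong semiconformality whenever the type is at least $3$. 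A subtlety to watch is the degenerate situation analogous to $\alpha_r+\beta_1=0$ in Lemma~\ref{kindtwo}, where one must instead extract the explicit type-$2$ vector $[{\bf u}_2\ -{\bf u}_2\ {\bf 0}\ \cdots]^T$ (or its ${\bf u}_1$-analogue) directly; and one must confirm throughout that the peeled-off ${\bf z}$ is genuinely not in $T$-forcing position, i.e.\ that ${\bf w}\neq{\bf 0}$, which follows from ${\bf v}\notin T$ together with the type being $r$.
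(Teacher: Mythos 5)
Your proposal is correct and follows essentially the same route as the paper: the same set $T$ of type-$2$ vectors with rows ${\bf u},-{\bf u}$ for ${\bf u}\in\MG(\MA)$, reduction to full type via Remark~\ref{declaw}, the split into the all-rows-mixed case (Lemma~\ref{kindtwoci}) versus the degenerate case where some $\alpha_i$ or $\beta_i$ vanishes and one peels off $[{\bf u}_2\ \cdots\ -{\bf u}_2]^T$ (or its ${\bf u}_1$-analogue), with Lemma~\ref{sc=ssc} upgrading to strong semiconformality. The only cosmetic difference is that the paper dispatches $r=2$ by citing that $\MA^{(2)}$ has $\MM(\MA^{(2)})=\MG(\MA^{(2)})=T$, whereas you handle type-$2$ vectors outside $T$ directly via a proper conformal decomposition and Lemma~\ref{c_sc_ssc}(i); both work.
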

\begin{proof}

We denote by $T$ the set of type two vectors from $\ML(\MA^{(r)})$ whose nonzero rows are ${\bf u},-{\bf u}$, where ${\bf u}\in \MG(\MA)$. If $r=2$ then $\MS(\MA^{(2)})=\MM(\MA^{(2)})=\MG(\MA^{(2)})=T$, by \cite[Theorem 7.1]{St}, hence the conclusion follows immediately. Hence we may assume that $r\geq 3$. In general we have $T\subseteq\MS(\MA^{(r)})\subseteq\MM(\MA^{(r)})$ for any $r\geq 2$, so it remains to prove only that $\MM(\MA^{(r)})\subseteq T$ to get the desired conclusion. The latter will follow via Proposition~\ref{universalmarkov} if we show that any nonzero vector ${\bf v}\in\ML(\MA^{(r)})\setminus T$ has a proper strongly semiconformal decomposition. 

Without loss of generality we may assume that we are in the setting of Proposition~\ref{completeint}. Let ${\bf v}\in\ML(\MA^{(r)})\setminus T$ be a nonzero vector. We may assume via Remark~\ref{declaw} that the type of ${\bf v}$ is $r$.  By Lemma~\ref{sccompint} we know that for each row vector ${\bf v}_i$ of ${\bf v}$ either ${\bf v}_i=\alpha_i (-{\bf u}_1) +_{sc} \beta_i (\pm {\bf u}_2)$ or ${\bf v}_i=\beta_i (\pm {\bf u}_2) +_{sc} \alpha_i{\bf u}_1$
 for  $\alpha_i,\beta_i\in\NN$ with $\alpha_i+\beta_i>0$. We have three cases to analyze:
\begin{enumerate} 
\item[(a)]  $\alpha_i\beta_i\neq 0$ for all $i=1,\ldots,r$,  
\item[(b)] $\exists$   $i\in\{1,\ldots,r\}$ such that $\alpha_i=0$,
\item[(c)] $\exists$   $i\in\{1,\ldots,r\}$ such that $\beta_i=0$. 
\end{enumerate}

For case (a) we apply Lemma~\ref{kindtwoci} and we are done.  For case (b) let $i\in\{1,\ldots,r\}$ be such that $\alpha_i=0$, and since $\alpha_i+\beta_i>0$ then $\beta_i>0$. Since the type of ${\bf v}$ is $r$, by Lemma~\ref{sccompint},  ${\bf v}$ is of the form given in Lemma~\ref{kindtwoci}, the only difference being that some of the coefficients $\alpha_i,\beta_i$ might be zero, (not simultaneously). If $i\leq s$ then it follows from (\ref{need}) that there exists $j$ with $s+1\leq j\leq r$ such that $\beta_j\neq 0$. Let ${\bf z}$ be the type two vector with $i$-th row  ${\bf u}_2$ and $j$-th row  $-{\bf u}_2$ and let 
${\bf w}={\bf v}-{\bf z}$. Then ${\bf w}$ is nonzero since the type of ${\bf v}$ is $r\geq 3$. Moreover by the sign patterns it follows that ${\bf v}={\bf z}+_{sc}{\bf w}$. That the sum is also strongly semiconformal follows either by Lemma~\ref{sc=ssc} if $\beta_i=1$ or by noticing that ${\bf v}_i^+\neq{\bf z}_i^+$ and ${\bf v}_i^-\neq{\bf w}_i^-$ if $\beta_i>1$, which implies ${\bf v}^+\neq{\bf z}^+$ and ${\bf v}^-\neq{\bf w}^-$. For case (c) a similar argument as in case (b) holds. 
\end{proof}  

We note that in the case of a monomial curve $\MA$ in $\AA^3$ whose corresponding toric ideal is a complete intersection,  $\MS(\MA)$ consists of exactly one vector and the Graver basis of $\MS(\MA)$ is $\{\bf 0\}$. Thus in this case the bound given in \cite[Theorem 3.11]{HS} is $0$, and it is strictly smaller than $m(\MA)$.

\section{Graver complexity of monomial curves in $\AA^3$}

In \cite[Theorem 3]{SS},  it was shown that  $g(\MA)$, the Graver complexity of $\MA$,  is the maximum $1$-norm of any element in the Graver basis of the Graver basis of $\MA$. However computing the Graver basis of the Graver basis of $\MA$ is computationally challenging to say the least. In this section we give a lower bound for the Graver complexity of a  monomial curve $\MA$ in $\AA^3$. This shows that in general the upper bound for Markov complexity is rather crude: given any $k\in \NN$, one can find appropriate configuration $\MA=\{n_1,n_2,n_3\}$ so that the $g(\MA)\ge k$, while $m(\MA)\le 3$. First we show that in order to compute the Graver complexity of a monomial curve in $\AA^3$, one can assume that the configuration consists of vectors that are pairwise relatively prime. 

\begin{Proposition}\label{red_graver} Let $\MA=\{n_1,n_2,n_3\}$ such that $(n_1,n_2,n_3)=1$. For  $1\leq i< j\leq 3$ we let $d_{ij}=(n_i, n_j)$ and consider $ \MA_{red}= \{ {n_1}/{d_{12} d_{13}},\; {n_2}/{d_{12} d_{23}},\; {n_3}/{d_{13} d_{23}}\}$. Then  $g(\MA)=g(\MA_{red})$.
\end{Proposition}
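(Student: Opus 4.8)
The plan is to reduce everything to the Santos--Sturmfels description: by \cite[Theorem 3]{SS}, $g(\MA)=\max\{\|\lambda\|_1:\lambda\in\MG(B)\}$, where $B$ is the matrix whose columns are the elements of $\MG(\MA)$ and $\MG(B)$ denotes the Graver basis of $\Ker_{\ZZ}(B)$ (the Graver basis of the Graver basis), and similarly $g(\MA_{red})$ is read off from a matrix $B_{red}$ built from $\MG(\MA_{red})$. Since the Graver basis of a matrix is determined by its integer kernel, it suffices to produce a bijection $\MG(\MA)\to\MG(\MA_{red})$, ${\bf g}_i\mapsto{\bf h}_i$, under which $\Ker_{\ZZ}(B)=\Ker_{\ZZ}(B_{red})$ as sublattices of $\ZZ^k$, $k=|\MG(\MA)|$. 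First I would record that $d_{12},d_{13},d_{23}$ are pairwise coprime, since a common prime of two of them would divide $\gcd(n_1,n_2,n_3)=1$, and introduce the positive diagonal matrices $D=\operatorname{diag}(d_{12}d_{13},\,d_{12}d_{23},\,d_{13}d_{23})$ and $E=\operatorname{diag}(d_{23},d_{13},d_{12})$, which satisfy $DE=ED=cI$ with $c=d_{12}d_{13}d_{23}$. Writing $n_i=m_i D_{ii}$ where $\MA_{red}=\{m_1,m_2,m_3\}$, one checks directly that ${\bf v}\in\ML(\MA)\iff D{\bf v}\in\ML(\MA_{red})$ and that ${\bf w}\in\ML(\MA_{red})\Rightarrow E{\bf w}\in\ML(\MA)$; since $D,E$ are positive diagonal they preserve sign patterns, hence the conformal order $\sqsubseteq$ (where ${\bf v}\sqsubseteq{\bf u}$ means ${\bf v}^+\le{\bf u}^+$ and ${\bf v}^-\le{\bf u}^-$), and $\MG$ consists exactly of the $\sqsubseteq$-minimal nonzero lattice vectors.

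The heart of the argument, which I expect to be the main obstacle, is that $D$ scales every primitive lattice vector by the \emph{same} constant $c$: for any primitive ${\bf g}\in\ML(\MA)$ the $\gcd$ of the entries of $D{\bf g}$ equals $c$. I would prove this prime by prime. Fix a prime $p$; by pairwise coprimality $p$ divides at most one $d_{ij}$, say $p\mid d_{12}$ with $v_p(d_{12})=a$ (the cases $p\nmid d_{12}d_{13}d_{23}$ and $p\mid d_{13}$ or $p\mid d_{23}$ being analogous). From $d_{12}=\gcd(n_1,n_2)$ and $d_{13}=\gcd(n_1,n_3)$ one gets $v_p(n_1),v_p(n_2)\ge a$ and $v_p(n_3)=0$; plugging this into $n_1{\bf g}_1+n_2{\bf g}_2+n_3{\bf g}_3=0$ forces $v_p({\bf g}_3)\ge a$, while primitivity of ${\bf g}$ forbids $p$ from dividing all three entries, so $\min(v_p({\bf g}_1),v_p({\bf g}_2))=0$. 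A short valuation count then gives $v_p(\gcd(D{\bf g}))=a=v_p(c)$. This lets me define $\phi({\bf g})=D{\bf g}/c$, a primitive vector of $\ML(\MA_{red})$, with the crucial feature that $c$ does not depend on ${\bf g}$; its inverse is ${\bf h}\mapsto E{\bf h}$, because $DE=cI$ and $E$ is integral.

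Next I would show that $\phi$ restricts to a bijection $\MG(\MA)\to\MG(\MA_{red})$. For the forward inclusion, if $\phi({\bf g})={\bf h}$ admitted a proper conformal decomposition ${\bf h}={\bf h}_1+_c{\bf h}_2$, then applying the integral matrix $E$ gives ${\bf g}=E{\bf h}=E{\bf h}_1+_c E{\bf h}_2$, a proper conformal decomposition inside $\ML(\MA)$ (here integrality of $E$ is what keeps the summands in the lattice), contradicting ${\bf g}\in\MG(\MA)$. For the reverse, suppose ${\bf h}\in\MG(\MA_{red})$ but ${\bf g}=E{\bf h}\notin\MG(\MA)$; then there is ${\bf g}_1\in\MG(\MA)$ with ${\bf 0}\neq{\bf g}_1\sqsubseteq{\bf g}$ and ${\bf g}_1\neq{\bf g}$, and applying $D$ yields $D{\bf g}_1\sqsubseteq D{\bf g}=c{\bf h}$, so dividing by $c$ gives $\phi({\bf g}_1)\sqsubseteq{\bf h}$ with $\phi({\bf g}_1)\neq{\bf 0},{\bf h}$, contradicting the conformal minimality of ${\bf h}$. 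The asymmetry to watch is that $\phi$ carries a division by $c$ whereas $\psi$ does not; it is resolved precisely because passing to a Graver summand ${\bf g}_1$ makes that division compatible with $\sqsubseteq$.

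Finally I would assemble the conclusion. Order $\MG(\MA)=\{{\bf g}_1,\dots,{\bf g}_k\}$ and $\MG(\MA_{red})=\{{\bf h}_1,\dots,{\bf h}_k\}$ with ${\bf h}_i=\phi({\bf g}_i)$, so $D{\bf g}_i=c\,{\bf h}_i$ with the same $c$. For $\lambda\in\ZZ^k$, applying the invertible $D$ and dividing by $c$ shows $\sum_i\lambda_i{\bf g}_i={\bf 0}\iff\sum_i\lambda_i{\bf h}_i={\bf 0}$, so $\Ker_{\ZZ}(B)=\Ker_{\ZZ}(B_{red})$ inside $\ZZ^k$. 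Since the Graver basis of a matrix is determined by its kernel lattice, $\MG(B)=\MG(B_{red})$, whence the two have equal maximal $1$-norm and $g(\MA)=g(\MA_{red})$. The decisive point is the uniform-scalar lemma of the second paragraph: it is exactly the constancy of $c$ over all Graver elements that forces the two kernels to coincide, and with them the two Graver complexities.
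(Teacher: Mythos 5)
Your proof is correct, but it takes a genuinely different route from the paper's. Both arguments rest on the same underlying observation: a positive diagonal rescaling identifies $\ML(\MA)$ with $\ML(\MA_{red})$ (your map ${\bf g}\mapsto D{\bf g}/c=E^{-1}{\bf g}$ is exactly the composite of the paper's one-divisor-at-a-time identifications $(a_1,a_2,a_3)\mapsto (a_1,a_2,a_3/d)$), and since such a rescaling preserves sign patterns it preserves conformal decompositions and hence Graver bases. The paper stops there and works directly with the definition of $g(\MA)$ as a maximal type: it applies the bijection row by row to $\ML(\MA^{(r)})$, getting a type-preserving bijection between $\MG(\MA^{(r)})$ and $\MG(\MA_{red}^{(r)})$ for every $r$, and is done. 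You instead invoke \cite[Theorem 3]{SS} and prove the stronger statement that the Graver basis \emph{of the Graver basis} is literally unchanged, because the two kernels $\Ker_{\ZZ}(B)$ and $\Ker_{\ZZ}(B_{red})$ coincide in $\ZZ^k$. That reduction is what forces you to prove the uniform-scalar lemma ($\gcd(D{\bf g})=c$ for every primitive ${\bf g}\in\ML(\MA)$, so that $D{\bf g}_i=c\,{\bf h}_i$ with one and the same $c$); your valuation argument for it is correct, and the paper's route never needs it. The trade-off: your proof is longer and leans on a nontrivial external theorem, but it delivers more than the equality of the two complexities, namely that $\MA$ and $\MA_{red}$ have identical second Graver bases; the paper's argument is shorter, more elementary, and also yields the type-preserving correspondence at the level of each individual Lawrence lifting.
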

\begin{proof}  Let  $d\in \NN$  divide both $n_1,n_2$ and consider $\MA'= \{ n_1/d, n_2/d, n_3\}$. It is clear that $(a_1,  a_2, a_3)\in \ML(\MA)$ if and only if $(a_1, a_2, a_3/d) \in \ML(\MA')$.  This implies  that there is a one-to-one correspondence between the elements of the Graver bases of $\MA$ and $\MA_{red}$. 
Moreover it is clear that if there is an element of type $t$ in the Graver basis of $\MA^{(r)}$ then there is an element of type $t$ in the Graver basis of $\MA_{red}^{(r)}$ and vice versa. Hence $g(\MA)=g(\MA_{red})$.
\end{proof}

To prove the lower bound of the next theorem, we create an  element whose type is the desired lower bound in the appropriate Lawrence lifting. 

\begin{Theorem}\label{lowerboundgraver}
Let $\MA=\{n_1,n_2,n_3\}$ such that $(n_1,n_2,n_3)=1$ and $d_{ij}=(n_i, n_j)$ for all $i\neq j$. Then 
\[
g(\MA)\geq \frac{n_1}{d_{12}d_{13}} + \frac{n_2}{d_{12}d_{23}} + \frac{n_3}{d_{13}d_{23}}.
\]
In particular, if $n_1,n_2,n_3$ are pairwise prime then $g(\MA)\geq n_1+n_2+n_3$. 
\end{Theorem}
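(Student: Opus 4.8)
The plan is to reduce to the pairwise coprime case and then to exhibit, inside the Graver basis of the Graver basis of $\MA$, an explicit element of $1$-norm $n_1+n_2+n_3$; the lower bound then follows from the reformulation of $g(\MA)$ in \cite[Theorem 3]{SS} as the maximum $1$-norm of an element in the Graver basis of $\MG(\MA)$, where $\MG(\MA)$ is regarded as the integer matrix whose columns are the Graver basis vectors of $\MA$. First I would invoke Proposition~\ref{red_graver} to get $g(\MA)=g(\MA_{red})$. A short $p$-adic valuation argument, using $\gcd(n_1,n_2,n_3)=1$, shows that the three entries of $\MA_{red}$ are pairwise coprime, so it suffices to prove the ``in particular'' statement: if $n_1,n_2,n_3$ are pairwise coprime then $g(\MA)\geq n_1+n_2+n_3$.

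So assume $n_1,n_2,n_3$ are pairwise coprime. I would single out the three vectors
\[
{\bf g}_1=(0,n_3,-n_2),\qquad {\bf g}_2=(-n_3,0,n_1),\qquad {\bf g}_3=(n_2,-n_1,0),
\]
each of which lies in $\ML(\MA)$. Because $\gcd(n_i,n_j)=1$ each ${\bf g}_k$ is primitive, and a direct check (a conformal decomposition of ${\bf g}_3$ would force $a n_1=b n_2$ with $0\le a\le n_2$, $0\le b\le n_1$, which by coprimality has only the trivial solutions) shows that no ${\bf g}_k$ admits a proper conformal decomposition; hence ${\bf g}_1,{\bf g}_2,{\bf g}_3\in\MG(\MA)$. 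The key observation is the syzygy
\[
n_1{\bf g}_1+n_2{\bf g}_2+n_3{\bf g}_3={\bf 0},
\]
which says precisely that the vector ${\bf w}$ having entries $n_1,n_2,n_3$ in the coordinates indexed by ${\bf g}_1,{\bf g}_2,{\bf g}_3$ and $0$ elsewhere lies in $\Ker_{\ZZ}(\MG(\MA))$. Its $1$-norm is $n_1+n_2+n_3$.

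It remains to prove that ${\bf w}$ actually belongs to the Graver basis of $\MG(\MA)$, not merely to its kernel; this is the crux of the argument. Since ${\bf w}\ge{\bf 0}$, any conformal decomposition ${\bf w}={\bf w}_1+_c{\bf w}_2$ must have ${\bf w}_1,{\bf w}_2\ge{\bf 0}$ and therefore be supported on the same three coordinates, say ${\bf w}_1=(a_1,a_2,a_3)$ with $0\le a_k\le n_k$ and $a_1{\bf g}_1+a_2{\bf g}_2+a_3{\bf g}_3={\bf 0}$. Reading off the three coordinate equations gives $a_3n_2=a_2n_3$, $a_1n_3=a_3n_1$ and $a_2n_1=a_1n_2$; pairwise coprimality forces $a_3\in\{0,n_3\}$ and then $a_1\in\{0,n_1\}$, $a_2\in\{0,n_2\}$ consistently, so ${\bf w}_1\in\{{\bf 0},{\bf w}\}$. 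Thus ${\bf w}$ has no proper conformal decomposition, and \cite[Theorem 3]{SS} yields $g(\MA)\ge\|{\bf w}\|_1=n_1+n_2+n_3$.

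The main obstacle is exactly this last verification: producing a kernel vector is easy, but one must be sure it is primitive (conformally indecomposable) in the second Graver basis, and the whole argument hinges on the pairwise coprimality of the reduced configuration to close off every nontrivial conformal splitting. A minor point to handle carefully is the choice of signs of the representatives ${\bf g}_k$ in the matrix $\MG(\MA)$; since the $1$-norm is insensitive to sign this does not affect the conclusion, but the coordinate bookkeeping in the conformal decomposition should be written in whichever sign convention makes ${\bf w}$ nonnegative.
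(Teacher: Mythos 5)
Your proof is correct, and it reaches the bound by the same combinatorial core as the paper but through a different interface. The paper works directly with the definition of $g(\MA)$ as the maximal type: it builds an explicit ${\bf w}\in\ML(\MA^{(k)})$ with $k=n_1+n_2+n_3$ whose rows are copies of the three circuits $(n_2,-n_1,0)$, $-(n_3,0,-n_1)$, $(0,n_3,-n_2)$, and shows ${\bf w}\in\MG(\MA^{(k)})$ by observing that a conformal decomposition must split the multiset of rows, yielding a shorter nonnegative relation $t_1{\bf v}_1-t_2{\bf v}_2+t_3{\bf v}_3={\bf 0}$, which pairwise coprimality forbids. You instead invoke \cite[Theorem 3]{SS} and exhibit the corresponding kernel vector of the matrix $\MG(\MA)$, proving it conformally indecomposable there; under the Santos--Sturmfels correspondence these are the same element, and the divisibility argument that closes off proper splittings is identical. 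What your route buys is cleaner bookkeeping: your syzygy $n_1{\bf g}_1+n_2{\bf g}_2+n_3{\bf g}_3={\bf 0}$ is verified correctly, whereas the paper's assignment of $n_1$ rows to ${\bf v}_1=(n_2,-n_1,0)$, $n_2$ to $-{\bf v}_2$ and $n_3$ to ${\bf v}_3$ has the multiplicities permuted (the rows sum to zero only with $n_3$ copies of ${\bf v}_1$, $n_2$ of $-{\bf v}_2$ and $n_1$ of ${\bf v}_3$) --- a slip that does not affect the total type $n_1+n_2+n_3$. Your explicit checks that the ${\bf g}_k$ lie in $\MG(\MA)$, that $\MA_{red}$ is pairwise coprime, and that the sign choice of representatives in the matrix $\MG(\MA)$ is harmless, are all points the paper leaves implicit; they are handled correctly.
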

\begin{proof} By Proposition~\ref{red_graver} $g(\MA)=g(\MA_{red})$ and thus we may assume that $d_{ij}=1$ for all $i\neq j$. We note that  ${\bf v}_1=(n_2, -n_1, 0)$, ${\bf v}_2=(n_3, 0, -n_1)$ and ${\bf v}_3=(0, n_3, -n_2)$ are clearly in the Graver basis of $\MA$. Let $k=n_1+n_2+n_3$ and consider   the $r\times n$ matrix ${\bf w}$ with row vectors ${\bf w}_i$ for $i=1,\ldots, k$ so that ${\bf w}_i={\bf v}_1$ for the first $n_1$ rows,  ${\bf w}_i=-{\bf v}_2$ for the next $n_2$ rows and  ${\bf w}_i={\bf v}_3$ for the last $n_3$ rows. It easy to see that ${\bf w}\in \ML(\MA^{(k)})$. We will show that $\bf w$ is in the Graver basis of $\MA^{(k)}$. Assume by contradiction that ${\bf w}\notin\MG(\MA^{(k)})$. Then ${\bf w}={\bf z}+_c{\bf u}$ for some nonzero vectors ${\bf z,u}\in\ML(\MA^{(k)})$. Since ${\bf w}_i\in\{{\bf v}_1,-{\bf v}_2,{\bf v}_3\}$ and  belongs to $\MG(\MA)$ it follows that ${\bf z}_i$ or ${\bf u}_i$ must equal ${\bf w}_i$ for $1\le i\le k$. Therefore by summing up the rows of ${\bf z}$ we obtain the relation 
\begin{equation}\label{eq_grav}
t_1{\bf v}_1-t_2{\bf v}_2+t_3{\bf v}_3={\bf 0}
\end{equation}
for some nonnegative integers $t_1,t_2,t_3$ such that $t_1+t_2+t_3<n_1+n_2+n_3$. However, Equation~\ref{eq_grav} implies immediately that $t_1n_2=t_2n_3$, $t_1n_1=t_3n_3$ and $t_2n_1=t_3n_2$ and since $n_1,n_2,n_3$ are pairwise relatively prime by assumption, it follows  that $n_3|t_1$, $n_2|t_2$ and $n_1|t_3$. Hence  $n_1+n_2+n_3\leq t_1+t_2+t_3$ a contradiction. 
\end{proof}

The following example shows that in general the inequality from Theorem~\ref{lowerboundgraver} can be strict.

\begin{Examples}{\rm (a) Let $\MA=\{3,4,5\}$. Computations with 4ti2 show  that the maximum  $1$-norm of the elements of $\MG(\MG(\MA))$ is $12$ and thus $g(\MA)$ equals the the lower bound of Theorem~\ref{lowerboundgraver}.

(b) Let $\MA=\{2,3,17\}$. Computations with 4ti2 show that the maximum  $1$-norm of the elements of $\MG(\MG(\MA))$ is $30$ and thus $g(\MA)=30$, while the lower bound of Theorem~\ref{lowerboundgraver} is $22$.}
\end{Examples} 

For the rest of this section we want to briefly comment about the Markov complexity of {\it generalized Lawrence liftings} of monomial curves in $\AA^3$. Given $\MA$, $r\ge 2$ and  $B\in \MM_{d\times 3}(\NN)$, the $r$-th generalized Lawrence lifting of $\MA$ with $B$, $\Lambda(\MA,B,r)$, differs from $\MA^{(r)}$ in the last row block: $B$  replaces $I_n$, see  \cite[Definition 3.3]{HS}. A vector ${\bf v}$ belongs to $\ML(\Lambda(\MA,B,r))$ if and only if its row vectors ${\bf v}_i\in\ML(\MA)$ for all $i=1,\ldots,r$, and $\sum_{i=1}^r {\bf v}_i\in\Ker(B)$. Therefore $\ML(\MA^{(r)})\subset\ML(\Lambda(\MA,B,r))$ for all $r\geq 2$. We define  $m(\MA,B)$ as  the largest type of any vector in the universal Markov basis of $\Lambda(\MA,B,r)$ as $r$ varies.  
 
\begin{Example}\label{genmarkov}
{\em Let $\MA=\{3,4,5\}$. It follows from Theorem~\ref{notcompint} that $I_{\MA}$ is not a complete intersection, and $\MM(\MA)$ consists of the following three vectors ${\bf u}_1=(-3,1,1)$, ${\bf u}_2=(1,-2,1)$ and ${\bf u}_3=(2,1,-2)$. Applying Theorem~\ref{markovnotci} we obtain $m(\MA,I_3)=m(\MA)=3$. 

Let $B_1=(1\ 3\ 0)$. One can show that $m(\MA,B_1)=2$. We briefly indicate how to prove this equality, without details, since the proof is similar to the ones given in Section 2. More precisely, with the same techniques from Lemma~\ref{indispensablemon} and Lemma~\ref{kindtwo}, and applying Proposition~\ref{universalmarkov}, one can prove that for any $r\geq 2$ the universal Markov basis $\MM(\Lambda(\MA,B,r))$ consists of the vectors of type $1$ with the nonzero row ${\bf u}_1$ and the vectors of type $2$ such that its two nonzero rows are ${\bf u}_2,{\bf u}_3$ or ${\bf u},-{\bf u}$, with $u\in\MG(\MA)\setminus\{{\bf u}_1\}$. We also note that in this case $m(\MA,B_1)$ equals the lower bound given by Ho\c sten and Sullivant in \cite[Theorem 3.11]{HS}.

However if $B_2=(1\ n\ 0)$, and $n\geq 4$ then by \cite[Theorem 3.11]{HS} it follows that $m(\MA,B_2)\geq \;{(3n+1)}/{5}$ if $n\equiv 3(\mod 5)$ or 
$m(\MA,B_2)\geq 3n+1$ otherwise. Indeed, note that $B_2\cdot \MS(\MA)=(n-3\ 1-2n\ n+2)$ and in the Graver basis of this matrix there is the circuit $(0,\frac{n+2}{5},\frac{2n-1}{5})$ if $n\equiv 3(\mod 5)$, and $(0,n+2,2n-1)$ otherwise.}
\end{Example} 
 
The previous example shows that while the Markov complexity $m(\MA)$ is bounded above by three for any monomial curve in $\AA^3$, the Markov complexity  $m(\MA,B)$ of the generalized Lawrence liftings of monomial curves in $\AA^3$ is not bounded. Based on computations with 4ti2 \cite{4ti2}, we are tempted to raise the following question. 

\begin{Question}\label{one} {\em Let $\MA$ be a monomial curve in $\AA^3$ so that $I_{\MA}$ is not a complete intersection.  Is $m(\MA,B)$  equal to the maximum $1$-norm of the elements in the Graver basis of $B\cdot \MS(\MA)$, i.e. the lower bound given   in \cite[Theorem 3.11]{HS}?} 
\end{Question}   
 
Computing  Markov complexity is an extremely challenging problem, and a formula for it seems hard to find in general. We pose a final question  based on extensive computational evidence.

\begin{Question}\label{two} 
{\em Let $A\in\MM_{m\times n}(\NN)$. Is   $m(A)$  equal to the smallest integer $r$ such that a minimal Markov basis  of $A^{(r+1)}$ does not contain elements of type $r+1$?}    
\end{Question}

\end{document}